\theoremstyle{plain}
\newtheorem{thm}{Theorem}[section]
\newtheorem{lem}[thm]{Lemma}
\newtheorem{prop}[thm]{Proposition}
\newtheorem{cor}[thm]{Corollary}
\newtheorem{rem}[thm]{Remark}
\theoremstyle{definition}
\newtheorem{exmp}[thm]{Example}
\newcommand{\la}{\lambda}
\newcommand{\tabincell}[2]{\begin{tabular}{@{}#1@{}}#2\end{tabular}}  %
\numberwithin{equation}{section} \errorcontextlines=0
\newcommand{\GL}{\mathrm{GL}}
\begin{document}
\title{$Q$-Kostka polynomials and spin Green polynomials}
\author{Anguo Jiang}
\address{School of Mathematics, South China University of Technology,
Guangzhou, Guangdong 510640, China}
\email{jagmath@163.com}
\author{Naihuan Jing}
\address{Department of Mathematics, North Carolina State University, Raleigh, NC 27695, USA}
\email{jing@ncsu.edu}
\author{Ning Liu}
\address{School of Mathematics, South China University of Technology,
Guangzhou, Guangdong 510640, China}
\email{mathliu123@outlook.com}
\subjclass[2010]{Primary: 05E05; Secondary: 17B69, 05E10}\keywords{Kostka polynomials, Hall-Littlewood polynomials, Schur's $Q$-polynomials, projective characters}
\maketitle

\begin{abstract}
We study the $Q$-Kostka polynomials $L_{\lambda\mu}(t)$ by the vertex operator realization of the $Q$-Hall-Littlewood functions $G_{\lambda}(x;t)$  and derive new formulae for $L_{\lambda\mu}(t)$. In particular, we have established stability property for the Q-Kostka polynomials. We also introduce
spin Green polynomials $Y^{\lambda}_{\mu}(t)$ as both an analogue of the Green polynomials and deformation of the spin irreducible characters of $\mathfrak S_n$. Iterative formulas of the spin Green polynomials are given and some favorable properties parallel to the Green polynomials are obtained.
Tables of $Y^{\lambda}_{\mu}(t)$ are included for $n\leq7.$
\end{abstract}

\section{Introduction}
Kostka-Foulkes polynomials have important applications in algebraic combinatorics and representation theory of the symmetric group $\mathfrak S_n$ and the finite general linear group $\GL_n(\mathbb F_q)$ (see \cite{DLT} and \cite{K}). They are defined as the transition coefficients between the Hall-Littlewood symmetric functions and Schur symmetric functions:
\begin{align*}
s_{\lambda}(x)=\sum\limits_{\mu}K_{\lambda\mu}(t)P_{\mu}(x;t)
\end{align*}
where both $s_{\lambda}$ and $P_{\mu}(t)$ form bases of the ring $\varLambda_F$ of symmetric functions over the field $F=\mathbb{Q}(t)$ and $P_{\mu}(0)=s_{\mu}$.

In \cite{G}, Green introduced a family of polynomials later called Green's polynomials for determining the irreducible complex characters of the group $\GL_n(\mathbb F_q)$. Morris used Littlewood symmetric functions to derive the Morris iteration formula \cite{MO} in studying the Green polynomials. Recently two of us have used vertex operators to establish combinatorial formulas for the Green polynomials and several new formulas are found in this approach \cite{JL}.

The classical theory of irreducible characters of the symmetric group admits a remarkable spin generalization started in Schur's classic work \cite{S}, and most constructions for the symmetric group and symmetric functions admit highly nontrivial spin counterparts, such as Schur $Q$-functions, shifted tableaux and Robinson-Schensted-Knuth correspondence \cite{Sa, St}. Notably the Frobenius type character formula was generalized to nontrivial part of irreducible characters of the double covering groups of $\mathfrak S_n$ with Schur's Q-functions taking the role of the usual Schur functions.

Let $\Gamma$ be the subalgebra of $\Lambda$ generated over $\mathbb{C}(t)$ by odd power sums $p_1, p_3, p_5, \ldots$, where Schur's $Q$-functions $Q_{\mu}(x)$ form an orthogonal basis
when $\mu$ runs through strict partitions. $\Gamma$ can also be interpreted as a representation ring of the projective representations of the symmetric groups $\mathfrak S_n$.
As a subring of the symmetric functions, $\Gamma$ has been well-studied in \cite{M, St}. In particular, Schur's Q-functions appear as specialization of Hall-Littlewood symmetric functions at $t=-1$: $Q_{\mu}(x)=Q_{\mu}(x; -1)$, where
$Q_{\mu}(x; t)$ are the dual of the $P_{\mu}(x; t)$.

Tudose and Zabrocki \cite{TZ} constructed another basis $G_{\lambda}(x;t)$ indexed by strict partitions in
$\Gamma$,  which is analogous to that of the Hall-Littlewood functions in $\Lambda$
so that $G_{\lambda}(x;0)=Q_{\lambda}(x)$. 
Based on this new basis,
they introduced an analogous Kostka-Foulkes polynomials
$L_{\lambda\mu}(t)$ as the coefficient of $Q_{\lambda}(x)$ in $G_{\mu}(x;t)$. The Q-Kostka polynomials $L_{\lambda\mu}(t)$ have similar properties as their classical counterparts and are conjecturally positive.
Their similarity and simplicity to the usual Kostka polynomials are obvious, and yet the combinatorial interpretation 
has yet to be uncovered. 
In this paper, we would like to use their vertex operator realization to give some new iterative formulas. Moreover, we also would like to apply these polynomials to study Schur's projective characters of $\mathfrak S_n$ and find new iterative formulas.

To this end, we introduce spin analogue of the Green polynomials and study their interpretation as matrix coefficients of certain vertex operators. Through this we are able to demonstrate some of their favorable properties similar to the Green polynomials, which in turn facilitates finding new iterative formulas for the spin Green polynomials. As a by-product we then obtain new iterative formulas for the spin irreducible characters of the symmetric group $\mathfrak S_n$.

We remark that there is another version of spin analog of Hall-Littlewood functions $Q_{\mu}^{-}(x;t)$ introduced by Wan and Wang \cite{W} in connection with representation theory of the Hecke-Clifford algebras, which will be treated elsewhere separately \cite{JL3}.

The structure of the paper is as follows. Section 2 reviews the vertex operator realization of $Q$-Hall-Littlewood functions $G_{\lambda}(x;t)$ and recalls some properties of $G_{\lambda}(x;t).$ In Section 3, we
derive an iterative formulas to compute $L_{\lambda\mu}(t)$ (see Theorem \ref{t:Literative}). The spin Green polynomials are introduced and some parallel properties are presented (see Theorem \ref{t:spinG}) in Section 4. In particular, we derive iterative formulas for the projective characters of the symmetric group $\mathfrak S_n$. The paper ends with tables of the Q-analog of Green's polynomials for $3\leq n\leq 7$.

\section{Vertex operator realization of $G_{\lambda}(x;t)$}

In this section, we briefly review the terminology of symmetric functions \cite{M} and introduce the $Q$-Hall-Littlewood polynomials and $Q$-Kostka polynomials \cite{TZ}.

A {\it partition} is any (finite or infinite) sequence of non-negative integers in decreasing order: $\lambda_1\geq\lambda_2\geq\ldots\geq\lambda_i\geq\ldots$ with finitely many non-zero terms. The sum of the parts is the {\it weight} of $\lambda$.
 If $|\lambda|=n$, we say that $\lambda$ is a partition of $n$. A strict (resp. odd) partition $\lambda$ means that all the parts $\lambda_l$ are distinct (resp. odd). We denote the set of all strict (resp. odd) partitions with weight $n$ by $\mathcal{SP}_n$ (resp. $\mathcal{OP}_n$). The number of non-zero parts is the length $l(\lambda)$ of $\lambda$. The {\it Young diagram} of partition $\lambda$ is defined as the set of points $(i,j)\in \mathbb{Z}^{2}$ such that $1\leq j \leq\lambda_i$. Often we replace the nodes by squares. For each partition $\lambda$, we define
\begin{align}
n(\lambda)&=\sum\limits_{i\geq1}(i-1)\lambda_i.
\end{align}
The dominance order $\geq$ of partitions is defined by
\begin{align*}
\lambda\geq\mu \Leftrightarrow |\lambda|=|\mu| \quad \text{and}\quad \lambda_1+\cdots+\lambda_i\geq\mu_1+\cdots+\mu_i, \quad\forall i\geq1.
\end{align*}

For a partition $\lambda=(\lambda_1,\ldots,\lambda_l)$, we define
\begin{align}
\lambda^{\hat{i}}=(\lambda_1,\lambda_2,\ldots,\lambda_{i-1},\lambda_{i+1},\ldots,\lambda_l) \quad i=1,2,\cdots,l.
\end{align}

In this paper, we use $t$-integer $[n]_t=t^{n-1}+t^{n-2}+\cdots+t+1$. As usual $[n]_t! = [n]_t\cdots[1]_t$ and $[0]_t=1$.  The Gauss $t$-binomial symbol is
\begin{align*}
\left[\begin{matrix}n\\k\end{matrix}\right]_t=\frac{[n]_t!}{[k]_t![n-k]_t!}.
\end{align*}
Similarly, we introduce $(k>0)$
\begin{align*}
(k)_t=\frac{t^k-(-1)^k}{t+1}=t^{k-1}-t^{k-2}+\cdots+(-1)^{k-2}t+(-1)^{k-1}.
\end{align*}
We make the convention that $(0)_t=1$ and $(k)_t=0$ for $k<0.$ The relation between $[k]_t$ and $(k)_t$ can be easily obtained \cite{JL2}:
\begin{align*}
(k)_t+2\sum_{i=1}^{k-1}(i)_t=[k]_t.
\end{align*}


Let $\Gamma_{\mathbb{Q}}$ denote the subring of $\Lambda$ generated by the $p_r$, $r$ odd. Clearly $\Gamma$ is a graded ring
$\Gamma=\oplus_{n\geq0}\Gamma^{n}$, where $\Gamma^{n}$ is spanned by the $p_\lambda, \lambda\in\mathcal{OP}$ such that $|\lambda|=n$.
$\Gamma$ is equipped with the bilinear form $\langle\ , \ \rangle$ defined by
\begin{align}
\langle p_{\lambda}, p_{\mu}\rangle=2^{-l(\lambda)}\delta_{\lambda\mu}z_{\lambda}.
\end{align}
for any odd partitions $\lambda, \mu$, where $z_{\lambda}=\prod_{i\geq 1}i^{m_i(\lambda)}m_i(\lambda)!$ and $m_i(\lambda)$ is the multiplicity of $i$ in the partition $\lambda$.

It is known \cite{M} that Schur's $Q$-functions $Q_{\lambda},$ $\lambda$ strict, form an orthogonal $\mathbb{Z}$-basis of $\Gamma$
\begin{align}\label{e:form}
\langle Q_{\lambda}, Q_{\mu}\rangle=2^{l(\lambda)}\delta_{\lambda\mu}, \qquad \lambda, \mu\in\mathcal{SP}.
\end{align}

The multiplication operator $p_n:\Gamma \rightarrow \Gamma$ is of degree $n$. So, the adjoint operator of $p_n$ is the differential operator $p_n^* =\frac{n}{2}\frac{\partial}{\partial p_n}$ of degree $-n$. Note that * is $\mathbb Q(t)$-linear and anti-involutive and satisfies
\begin{equation}
\langle f_nu, v\rangle=\langle u, f_n^*v\rangle
\end{equation}
for any linear homogeneous operator $f_n$ and $u, v\in \Gamma$.

The vertex operator of Schur's $Q$-functions are introduced in \cite{J2} as follows.
\begin{align}
\label{e:schurQop}
Q(z)&=\mbox{exp} \left( \sum\limits_{n\geq 1, \text{odd}} \dfrac{2}{n}p_nz^{n} \right) \mbox{exp} \left( -\sum \limits_{n\geq 1, \text{odd}} \frac{\partial}{\partial p_n}z^{-n} \right)\\ \notag
&=\sum_{n\in\mathbb Z}Q_nz^{n}.
\end{align}
So the adjoint vertex operator $Q^*(z)=Q(-z^{-1})$, thus $Q^*_n=Q_{-n}$. In other words,
$\langle Q_nu, v\rangle=\langle u, Q_{-n}v\rangle$ for any $u, v\in\Gamma$ and $n\in\mathbb Z$.

It is known that for any strict partition $\lambda=(\lambda_1,\ldots,\lambda_l)$, we have $Q_{\lambda}=Q_{\lambda_1}Q_{\lambda_2}\cdots Q_{\lambda_l}.\mathbbm{1},$ where $\mathbbm{1}$ is the {\it vacuum vector}. The commutation relations for the $Q_m$ are as follows.
\begin{prop}\label{t:relation} The components of $Q(z)$ generate a Clifford algebra with the following relations:
\begin{align}\label{e:relation}
\{Q_m, Q_n\}&
=(-1)^n2\delta_{m,-n}.
\end{align}
Moreover,
\begin{align}
Q_{-n}.\mathbbm{1}=\delta_{n,0}, \quad   (n\geq0)
\end{align}
where $\{A, B\}:=AB+BA$ .
\end{prop}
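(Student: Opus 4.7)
The plan is to prove the anticommutation relations by the standard vertex operator contraction technique, reducing $Q(z)Q(w)$ to normal-ordered form and then extracting the ``singular part'' via a formal delta function. Factor $Q(z)=\phi^{+}(z)\phi^{-}(z)$ with
\begin{align*}
\phi^{+}(z)=\exp\Bigl(\sum_{n\geq 1,\,\text{odd}}\tfrac{2}{n}p_{n}z^{n}\Bigr),\qquad
\phi^{-}(z)=\exp\Bigl(-\sum_{n\geq 1,\,\text{odd}}\tfrac{\partial}{\partial p_{n}}z^{-n}\Bigr).
\end{align*}
Using $[\partial/\partial p_{n},p_{m}]=\delta_{nm}$ together with the identity $\sum_{n\geq1,\text{odd}}x^{n}/n=\tfrac{1}{2}\log\tfrac{1+x}{1-x}$, I would apply the Baker--Campbell--Hausdorff formula (which terminates here since the relevant commutator is scalar) to obtain
\begin{align*}
\phi^{-}(z)\phi^{+}(w)=\tfrac{z-w}{z+w}\,\phi^{+}(w)\phi^{-}(z),
\end{align*}
understood as a formal expansion in $w/z$. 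This immediately yields the normal-ordered form $Q(z)Q(w)=\tfrac{z-w}{z+w}{:}Q(z)Q(w){:}$, and by symmetry $Q(w)Q(z)=\tfrac{w-z}{w+z}{:}Q(z)Q(w){:}$ with the complementary expansion in $z/w$.

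The second step is the delta-function computation. Expanding each rational function in its own region and adding, one finds
\begin{align*}
\tfrac{z-w}{z+w}+\tfrac{w-z}{w+z}=2\sum_{k\in\mathbb{Z}}(-w/z)^{k}=2\delta(-w/z).
\end{align*}
The crucial algebraic cancellation is that $z^{n}+(-z)^{n}=0$ for all odd $n$, so both exponential factors in ${:}Q(z)Q(-z){:}$ collapse to the identity; that is, ${:}Q(z)Q(-z){:}=1$. Using the standard property $f(z,w)\delta(-w/z)=f(z,-z)\delta(-w/z)$ of the formal delta, this yields
\begin{align*}
\{Q(z),Q(w)\}=2\delta(-w/z).
\end{align*}
Extracting the coefficient of $z^{m}w^{n}$ from $\delta(-w/z)=\sum_{k}(-1)^{k}z^{-k}w^{k}$ gives $\{Q_{m},Q_{n}\}=2(-1)^{n}\delta_{m,-n}$, as claimed.

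The vacuum statement is immediate from the factorization: every $\partial/\partial p_{n}$ annihilates the constant $\mathbbm{1}$, so $\phi^{-}(z)\mathbbm{1}=\mathbbm{1}$ and hence $Q(z)\mathbbm{1}=\phi^{+}(z)\mathbbm{1}$ is a formal power series in non-negative powers of $z$ with constant term $\mathbbm{1}$. Therefore $Q_{-n}\mathbbm{1}=0$ for $n>0$ and $Q_{0}\mathbbm{1}=\mathbbm{1}$.

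The main obstacle is purely bookkeeping: one must carefully track the two expansion regions when passing from $Q(z)Q(w)$ and $Q(w)Q(z)$ to their sum, and justify that the result is indeed the formal Laurent series $2\delta(-w/z)$ rather than a mere formal identity of rational functions. Once this formal-series aspect is handled correctly, the sign $(-1)^{n}$ on the right-hand side is produced automatically by the substitution $w=-z$ forced by the delta function, and the remainder of the argument is essentially a direct application of the vertex operator calculus already in the literature for Schur $Q$-functions.
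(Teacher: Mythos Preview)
Your proof is correct. The paper does not actually supply a proof of this proposition; it is stated as a known result, the vertex operator $Q(z)$ having been introduced in \cite{J2}. Your argument is the standard vertex-operator contraction computation, and it is precisely the same technique the paper itself deploys a few lines later when proving the analogous quadratic relations for the $G$-operators in Proposition~\ref{t:GG}: pass to normal order, use the formal identity $\tfrac{z-w}{z+w}+\tfrac{w-z}{w+z}=2\delta(-w/z)$, collapse the normal-ordered product on the support of the delta, and extract coefficients. So your approach is not merely correct but is exactly in the spirit of the paper's own methods.
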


We introduce the operator $q(z)$ and its adjoint $q^*(z)$:
\begin{align}
q(z)&=\mbox{exp} \left( \sum\limits_{n\geq 1, \text{odd}} \dfrac{2}{n}p_nz^{n} \right) =\sum\limits_{n\geq0}q_nz^n,\\
q^*(z)&=\mbox{exp} \left(\sum \limits_{n\geq 1, \text{odd}} \frac{\partial}{\partial p_n}z^{-n} \right) =\sum\limits_{n\geq0}q^*_nz^{-n}.
\end{align}
Clearly, as a polynomial in the $p_k$, the polynomial $q_n$ is Schur's $Q$-function $Q_{(n)}$ associated with the one-row partition $(n)$. We define
\begin{align}
q_{\lambda}=q_{\lambda_1}q_{\lambda_2}\cdots q_{\lambda_{l}}.
\end{align}
Then $q_{\lambda}$, $\lambda$ strict, form a $\mathbb{Z}$-basis of $\Gamma$ .

In \cite{TZ}, Tudose and Zabrocki defined the $Q$-Hall-Littlewood function $G_{\lambda}(x;t)$
$\in \Gamma\otimes_{\mathbb{C}}\mathbb{C}(t)$:
\begin{align}
\label{e:defG}
G_{\lambda}(x;t):=\prod\limits_{i<j} \left(\dfrac{1+tR_{ij}}{1-tR_{ij}} \right) \left(\dfrac{1-R_{ij}}{1+R_{ij}} \right)q_{\lambda}=\prod\limits_{i<j} \left(\dfrac{1+tR_{ij}}{1-tR_{ij}} \right)Q_{\lambda}(x),
\end{align}
and the classical $Q$-function $Q_{\lambda}$ is the specialization at $t=0$.
Moreover, they have introduced the $q$-analog of the Kostka polynomials $L_{\lambda\mu}$ as the transition coefficients:
\begin{align}\label{e:defL}
G_{\mu}(x;t)=\sum\limits_{\lambda\in\mathcal{SP}}L_{\lambda\mu}(t)Q_{\lambda}(x), \qquad \mu\in\mathcal{SP}.
\end{align}

We remark that the $G_{\lambda},$ $\lambda$ strict, form a $\mathbb{Z}$-basis for $\Gamma\otimes_{\mathbb{Z}}\mathbb{Z}(t)$, but not an orthogonal one. Indeed, we have $G_{(3,2)}=Q_{(3,2)}+2tQ_{(4,1)}+2t^2Q_{(5)},$ $G_{(4,1)}=Q_{(4,1)}+2tQ_{(5)},$ but $\langle G_{(3,2)}, G_{(4,1)} \rangle=8t+8t^3\neq 0$ .

The $L_{\lambda\mu}(t)$ are called the {\it $Q$-Kostka polynomials} and this family of polynomials are known to share many properties with the classical Kostka-Foulkes polynomials. We will obtain more favorable properties parallel to the Kostka-Foulkes polynomials via vertex operators.

For this purpose, we introduce the vertex operators $G(z)$ and the adjoint operators $G^*(z)$
as the following  maps: $\Gamma\longrightarrow \Gamma[[z, z^{-1}]]=\Gamma\otimes\mathbb C(t)[[z, z^{-1}]]$ defined by
\begin{align}
\label{e:Qhallop}
G(z)&=\mbox{exp} \left( \sum\limits_{n\geq 1, \text{odd}} \dfrac{2}{n}p_nz^{n} \right) \mbox{exp} \left( \sum \limits_{n\geq 1, \text{odd}} (t^{n}-1) \frac{\partial}{\partial p_n}z^{-n} \right)\\ \notag
&=\sum_{n\in\mathbb Z}G_nz^{n},\\\label{e:Qhallop*}
G^*(z)&=\mbox{exp} \left(\sum\limits_{n\geq 1, \text{odd}} \dfrac{2(t^{n}-1)}{n}p_nz^{n} \right) \mbox{exp} \left(\sum \limits_{n\geq 1, \text{odd}} \frac{\partial}{\partial p_n}z^{-n} \right)\\ \notag
&=\sum_{n\in\mathbb Z}G^*_nz^{-n}.
\end{align}
Clearly, the components $G_n$ and $G_{-n}^*$ are annihilation operators of degree $-n$ for $n\geq0$. These vertex operators are analogues of the Hall-Littlewood vertex operator \cite{J1}. If we denote $G_{\lambda}.\mathbbm{1}=G_{\lambda_1}G_{\lambda_2}\cdots G_{\lambda_l}.\mathbbm{1}$, then clearly
they are equal to $G_{\lambda}$ by the usual vertex operator properties. Note that unlike the vertex operators for the Schur $Q$-functions, $G_nG_n.\mathbbm{1}~~(n>0)$ may not be zero! For example, $G_1G_1.\mathbbm{1}=2tQ_2\neq0.$

For $n\geq0,$ we have
\begin{align}\label{e:G*n}
G^*_{-n}.\mathbbm{1}&=\sum\limits_{\rho\in\mathcal{OP}_n}\frac{(-2)^{l(\rho)}}{z_{\rho}(t)}p_{\rho}.
\end{align}

By \eqref{e:form} and \eqref{e:defL}, we have
\begin{align*}
L_{\lambda\mu}(t)=2^{-l(\lambda)}\langle G_{\mu}(x;t), Q_{\lambda}(x) \rangle=2^{-l(\lambda)}\langle G_{\mu}.\mathbbm{1}, Q_{\lambda}.\mathbbm{1} \rangle.
\end{align*}

The vertex operators $G_n$ satisfy some quadratic relations. A $\lambda$-ring argument of the relations for $G_n$ were given in \cite{TZ}. In the following we use vertex operator technique to give a simpler proof.
\begin{prop}\label{t:GG}
We have the following commutation relations:
\begin{align}\label{e:GG}
\begin{split}
&(1-t^2)(G_mG_n+G_nG_m)+t(G_{m-1}G_{n+1}-G_{n+1}G_{m-1}\\
&+G_{n-1}G_{m+1}-G_{m+1}G_{n-1})=2(-1)^n(1-t)^2\delta_{m,-n}.
\end{split}
\end{align}
\end{prop}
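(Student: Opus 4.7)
The plan is to prove the relation by reformulating it as a single generating-function identity in the vertex operator $G(z) = \sum_n G_n z^n$ and then reducing that identity to an evaluation on the support of a formal delta function.

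First I would encode \eqref{e:GG} as a generating-function identity. Multiplying by $z^m w^n$, summing over $m,n\in\mathbb Z$, and using $\sum_{m,n}G_{m-1}G_{n+1}z^m w^n = (z/w)G(z)G(w)$ together with its three companions, the target identity is seen to be equivalent to
\begin{equation*}
(z-tw)(tz+w)\,G(z)G(w) + (w-tz)(z+tw)\,G(w)G(z) = 2(1-t)^2\,zw\,\delta(-w/z),
\end{equation*}
where $\delta(-w/z):=\sum_{k\in\mathbb Z}(-w/z)^k$, and the coefficients come from the factorizations $(1-t^2)zw\pm(tz^2-tw^2)=\pm(z\mp tw)(tz\pm w)$.

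Second, I would normal-order the products $G(z)G(w)$ and $G(w)G(z)$. A direct application of Baker--Campbell--Hausdorff, using $[\partial/\partial p_n,p_m]=\delta_{mn}$ and $\sum_{n\geq 1\text{ odd}} x^n/n = \tfrac12\log\tfrac{1+x}{1-x}$, shows that the commutator of the annihilation part of $G(z)$ and the creation part of $G(w)$ is $\log\frac{(z+tw)(z-w)}{(z-tw)(z+w)}$, whence
\begin{equation*}
G(z)G(w)=\frac{(z+tw)(z-w)}{(z-tw)(z+w)}\,\phi(z,w),\qquad G(w)G(z)=\frac{(w+tz)(w-z)}{(w-tz)(w+z)}\,\phi(z,w),
\end{equation*}
with $\phi(z,w):=\,:\!G(z)G(w)\!:$ the symmetric normal-ordered product; the first identity is expanded in $|z|>|w|$ and the second in $|w|>|z|$.

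Third, I would exploit the deliberate choice of the polynomial coefficients: the factors $(z-tw)(tz+w)$ and $(w-tz)(z+tw)$ cancel the denominators so that both products reduce to $\pm\frac{(z+tw)(tz+w)(z-w)}{z+w}\phi(z,w)$ in their respective regions. The standard formal identity
\begin{equation*}
\frac{1}{z+w}\bigg|_{|z|>|w|}-\frac{1}{z+w}\bigg|_{|w|>|z|}=z^{-1}\delta(-w/z)
\end{equation*}
then collapses the sum to $(z+tw)(tz+w)(z-w)\,\phi(z,w)\cdot z^{-1}\delta(-w/z)$. The final step is a substitution $w=-z$ on the support of $\delta(-w/z)$, justified by $w^n\delta(-w/z)=(-z)^n\delta(-w/z)$. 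The polynomial prefactor evaluates to $z(1-t)\cdot(-z)(1-t)\cdot 2z=-2z^3(1-t)^2$, and the crucial identity $\phi(z,-z)=1$ holds since only odd $n$ enter the exponents, so every $z^n+(-z)^n$ and $z^{-n}+(-z)^{-n}$ vanishes and both exponentials collapse to the identity. One thus obtains $-2z^2(1-t)^2\delta(-w/z)=2(1-t)^2 zw\,\delta(-w/z)$, as desired.

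The main obstacle is to respect the distinct expansion regions in which $G(z)G(w)$ and $G(w)G(z)$ are defined: after cancellation the two prefactors appear to be exact negatives of one another, and their naive algebraic sum would vanish, losing the entire content of \eqref{e:GG}. The whole identity is a delta-function anomaly arising from the pole at $z+w=0$, and the role of the evaluation $\phi(z,-z)=1$ is precisely to extract its residue cleanly.
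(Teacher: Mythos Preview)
Your proof is correct and follows essentially the same approach as the paper's: both normal-order $G(z)G(w)$ to the contraction factor $\frac{(z+tw)(z-w)}{(z-tw)(z+w)}$, use the difference of the two expansions of $\frac{1}{z+w}$ to produce $z^{-1}\delta(-w/z)$, and evaluate the symmetric normal-ordered product to $1$ at $w=-z$. The only cosmetic difference is the order of operations---the paper first forms $\frac{z-tw}{z+tw}G(z)G(w)+\frac{w-tz}{w+tz}G(w)G(z)$, collapses it to the delta term, and then multiplies by $(z+tw)(w+tz)$, whereas you multiply by the polynomial prefactors at the outset so as to cancel the $t$-poles before collapsing; your write-up is also more explicit about the expansion regions and the identity $\phi(z,-z)=1$, which the paper leaves implicit.
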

\begin{proof}
By \eqref{e:Qhallop}, we obtain that
\begin{align*}
\dfrac{z-tw}{z+tw}G(z)G(w)+\dfrac{w-tz}{w+tz}G(w)G(z)&=\colon G(z)G(w) \colon (\dfrac{z-w}{z+w}+\dfrac{w-z}{w+z})\\
&=-2\delta(-\dfrac{w}{z})
\end{align*}
where $\colon G(z)G(w) \colon$ is the normal ordering product and $\delta(x)=\sum x^n$.
Multiplying $(z+tw)(w+tz)$ we then have that
\begin{align*}
&(z-tw)(w+tz)G(z)G(w)+(w-tz)(z+tw)G(w)G(z)\\
&=-2(1-t)^2z^2\delta(-\dfrac{w}{z}),
\end{align*}
and the quadratic relations follow by taking the coefficient of $z^{m+1}w^{n+1}$.
\end{proof}

\section{Iterative formulae for $L_{\lambda\mu}(t)$}

In this section, we give an iterative formula for $L_{\lambda\mu}(t)$ on $\mu$ and derive some new properties of $L_{\lambda\mu}$ in Proposition \ref{t:stability} and examples (Example \ref{t:tworows}).

Using the techniques of vertex operators, we also have the following commutation relations:
\begin{align}\label{e:hallop1}
G^*(z)Q(w)+Q&(w)G^*(z)\frac{w+tz}{w-tz}=2q(tz)\delta(\frac{w}{z}),\\ \label{e:hallop2}
q^{*}(z)G(w)&=G(w)q^{*}(z)\frac{z+w}{z-w},\\\label{e:hallop2}
Q(z)q(w)&=q(w)Q(z)\frac{z-w}{z+w}.
\end{align}

Taking components, we get the following result.
\begin{prop}\label{p:rel1}
The commutation relations between the operator $G_m$ and Schur's $Q$-function operators are:
\begin{align}\label{e:rel1}
G^{*}_{m}Q_{n}&=Q_nG^{*}_m+t^{-1}G^{*}_{m-1}Q_{n-1}+t^{-1}Q_{n-1}G^*_{m-1}\nonumber \\
&-2t^{n-m-1}(1-t)q_{n-m},\\ \label{e:rel2}
q^{*}_{m}G_{n}&=G_{n}q^{*}_{m}+q^{*}_{m-1}G_{n-1}+G_{n-1}q^{*}_{m-1},\\\label{e:rel3}
Q_mq_n&=q_nQ_m-Q_{m+1}q_{n-1}-q_{n-1}Q_{m+1}.
\end{align}
\end{prop}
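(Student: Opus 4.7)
The plan is to deduce each of the three component identities by extracting coefficients from the corresponding generating-function identity displayed immediately before the proposition. Each of those identities involves a rational factor of the form $\tfrac{w+tz}{w-tz}$, $\tfrac{z+w}{z-w}$, or $\tfrac{z-w}{z+w}$, so the first step in every case is to clear the denominator by multiplying through, after which both sides are genuine formal power series in $z,w,z^{-1},w^{-1}$ that can be compared term-by-term.

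For \eqref{e:rel1}, I would multiply $G^*(z)Q(w)+Q(w)G^*(z)\tfrac{w+tz}{w-tz}=2q(tz)\delta(w/z)$ by $w-tz$. The right-hand side then simplifies using the standard delta identity $(w-tz)\delta(w/z)=(1-t)z\,\delta(w/z)$ (an instance of $f(w)\delta(w/z)=f(z)\delta(w/z)$) to $2(1-t)z\, q(tz)\delta(w/z)$. Writing $G^*(z)=\sum_M G^*_M z^{-M}$, $Q(w)=\sum_N Q_N w^N$, $q(tz)=\sum_{k\ge 0} q_k t^k z^k$, and $\delta(w/z)=\sum_{\ell\in\mathbb Z} z^{-\ell}w^\ell$, and comparing the coefficient of $z^{-M}w^N$ yields a linear relation in $G^*_M Q_{N-1}$, $G^*_{M+1}Q_N$, the two $Q G^*$ products in the opposite order, and a single $q$-term. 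Dividing through by $-t$ and relabelling $M+1\mapsto m,\ N\mapsto n$ produces \eqref{e:rel1}, the factor $t^{n-m-1}$ emerging from $t^k$ with $k=N-M-1$ after the division.

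The other two are easier because no delta function appears. For \eqref{e:rel2}, multiply $q^*(z)G(w)=G(w)q^*(z)\tfrac{z+w}{z-w}$ by $z-w$ and compare the coefficient of $z^{-M}w^N$, using $q^*(z)=\sum_{k\ge 0}q^*_k z^{-k}$ and $G(w)=\sum_n G_n w^n$; a shift $M+1\mapsto m$ gives the result. For \eqref{e:rel3}, multiply $Q(z)q(w)=q(w)Q(z)\tfrac{z-w}{z+w}$ by $z+w$ and extract the coefficient of $z^M w^N$; a shift $M-1\mapsto m$ produces the stated identity.

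The computation is essentially routine; the only real obstacle is careful index-bookkeeping, in particular choosing the correct index shift that recasts each raw coefficient identity into the displayed form, and checking that the exponent $n-m-1$ in the $q$-term of \eqref{e:rel1} survives the division by $-t$ correctly. The three underlying generating-function identities may themselves be established by a short normal-ordering calculation analogous to the proof of Proposition \ref{t:GG}, but since they are stated as the starting point of this section I would simply invoke them.
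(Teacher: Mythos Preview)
Your proposal is correct and is precisely the approach the paper takes: the paper's entire proof is the sentence ``Taking components, we get the following result,'' and your outline spells out exactly how one clears the rational factor, handles the delta function via $f(w)\delta(w/z)=f(z)\delta(w/z)$, and relabels indices to obtain each displayed relation. Your bookkeeping checks out (in particular, after setting $M=m-1$, $N=n$ the $q$-term has index $k=n-m$ and coefficient $2(1-t)t^{n-m}$, which becomes $-2(1-t)t^{n-m-1}$ upon dividing by $-t$).
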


Next we derive the iterative formula. 
\begin{thm}\label{t:G*Q}
For a given strict partition $\lambda=(\lambda_{1},\ldots ,\lambda_{l})$ and positive integer $k$, then
\begin{align}\label{e:G*Q}
G_{k}^{*}Q_{\lambda}.\mathbbm{1}&=\sum\limits_{i=1}^{l} (-1)^{i-1}2t^{\lambda_i-k}q_{\lambda_i-k}Q_{\lambda^{\hat{i}}}.\mathbbm{1}.
\end{align}
\end{thm}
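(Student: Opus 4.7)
The plan is to work at the level of generating functions: I would compute $G^*(z) Q(w_1) Q(w_2) \cdots Q(w_l) \mathbbm{1}$ as a formal series in $z, w_1, \ldots, w_l$ and then read off the coefficient of $z^{-k} w_1^{\lambda_1} \cdots w_l^{\lambda_l}$. The only ingredients are the two bosonic swaps preceding Proposition \ref{p:rel1}, namely \eqref{e:hallop1} for moving $G^*(z)$ past $Q(w)$ and \eqref{e:hallop2} for moving $Q(z)$ past $q(w)$.

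First I would iterate $G^*(z) Q(w) = -\tfrac{w+tz}{w-tz} Q(w) G^*(z) + 2 q(tz) \delta(w/z)$ to obtain, by an easy induction on $l$,
\begin{align*}
G^*(z) \prod_{j=1}^{l} Q(w_j) &= (-1)^l \prod_{j=1}^{l} \frac{w_j+tz}{w_j-tz} \cdot \prod_{j=1}^{l} Q(w_j) \cdot G^*(z) \\
&\quad + 2 \sum_{i=1}^{l} (-1)^{i-1} \prod_{j<i} \frac{w_j+tz}{w_j-tz} \cdot Q(w_1) \cdots Q(w_{i-1}) q(tz) Q(w_{i+1}) \cdots Q(w_l) \delta(w_i/z).
\end{align*}
In each summand on the right I would then move $q(tz)$ leftward past $Q(w_{i-1}), \ldots, Q(w_1)$ using \eqref{e:hallop2} rewritten as $Q(w_j) q(tz) = q(tz) Q(w_j) \tfrac{w_j-tz}{w_j+tz}$. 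The accumulated factor $\prod_{j<i} \tfrac{w_j-tz}{w_j+tz}$ exactly cancels $\prod_{j<i} \tfrac{w_j+tz}{w_j-tz}$, leaving
\begin{align*}
G^*(z) \prod_{j=1}^{l} Q(w_j) = (\text{scattering term}) + 2 \sum_{i=1}^{l} (-1)^{i-1} q(tz) \Big(\prod_{j \neq i} Q(w_j)\Big) \delta(w_i/z).
\end{align*}

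Applying this to $\mathbbm{1}$ and taking $[z^{-k}]$ for $k \geq 1$, the scattering term drops out, since $G^*(z) \mathbbm{1}$ (by the exponential form \eqref{e:Qhallop*}) and $\prod_j \tfrac{w_j+tz}{w_j-tz}$ (expanded in non-negative powers of $z$) are both regular at $z=0$. For each delta summand, the formal delta identity $[z^{-k}] f(z) \delta(w/z) = w^k f(w)$ yields $2 w_i^k q(t w_i) \prod_{j \neq i} Q(w_j) \mathbbm{1}$. Finally, since $\prod_{j \neq i} Q(w_j)\mathbbm{1}$ is independent of $w_i$, extracting $[w_i^{\lambda_i}]$ in $w_i^k q(tw_i) = \sum_{n \geq 0} q_n t^n w_i^{n+k}$ gives $t^{\lambda_i - k} q_{\lambda_i - k}$ (vanishing for $\lambda_i < k$), while the remaining $w$-coefficients produce $Q_{\lambda^{\hat{i}}} \mathbbm{1}$. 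Summing over $i$ delivers the stated identity. The main technical point will be the cancellation step above and fixing the correct expansion convention for $\tfrac{w+tz}{w-tz}$ so that the scattering term genuinely vanishes on the vacuum; once those are set up, the remainder is routine coefficient extraction.
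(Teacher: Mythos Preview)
Your argument is correct and takes a genuinely different route from the paper's. The paper proves the identity by induction on $k+|\lambda|$: it peels off the leftmost factor $Q_{\lambda_1}$ using the component relation \eqref{e:rel1}, applies the inductive hypothesis to the three resulting expressions $Q_{\lambda_1}G^*_kQ_{\lambda^{\hat 1}}$, $G^*_{k-1}Q_{\lambda_1-1}Q_{\lambda^{\hat 1}}$, $Q_{\lambda_1-1}G^*_{k-1}Q_{\lambda^{\hat 1}}$, and then recombines the many terms with \eqref{e:rel3}; this is a fairly long bookkeeping exercise. You instead stay at the level of generating series, iterating \eqref{e:hallop1} and then sliding $q(tz)$ to the left with the relation $Q(z)q(w)=q(w)Q(z)\tfrac{z-w}{z+w}$, so that the whole Pieri-type cancellation happens once and for all as a cancellation of rational prefactors rather than mode by mode. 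Your approach makes the underlying mechanism (delta-function localisation at $w_i=z$, plus regularity of the scattering term on the vacuum) completely transparent and is considerably shorter; the paper's mode-level induction has the compensating virtue of avoiding any discussion of expansion domains for the rational factors.

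One small point worth spelling out in a final write-up: both $\tfrac{w_j+tz}{w_j-tz}$ (arising from \eqref{e:hallop1}) and $\tfrac{w_j-tz}{w_j+tz}$ (arising from commuting $q(tz)$ past $Q(w_j)$) are expanded in non-negative powers of $z/w_j$ by the standard normal-ordering convention, so they are genuine mutual inverses as formal series. This justifies the cancellation and simultaneously guarantees that the scattering term carries only non-negative powers of $z$, so that $[z^{-k}]$ of it vanishes for $k\geq 1$ after hitting the vacuum.
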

\begin{proof}
This is proved by induction on $k+|\lambda|$. The first step is immediate. Assume that \eqref{e:G*Q} holds for $<k+|\lambda|$ , then
\begin{align*}
&Q_{\lambda_1}G^*_{k}Q_{\lambda_2}\cdots Q_{\lambda_l}.\mathbbm{1}\\
=&Q_{\lambda_1}(2t^{\lambda_2-k}q_{\lambda_2-k}Q_{\lambda_3}\cdots Q_{\lambda_l}.\mathbbm{1}
-2t^{\lambda_3-k}q_{\lambda_3-k}Q_{\lambda_2}Q_{\lambda_4}\cdots Q_{\lambda_l}.\mathbbm{1}\\
+&\cdots+(-1)^{l}2t^{\lambda_l-k}q_{\lambda_l-k}Q_{\lambda_2}\cdots Q_{\lambda_{l-1}}.\mathbbm{1}) .
\end{align*}
Also we have
\begin{align*}
&G^*_{k-1}Q_{\lambda_1-1}Q_{\lambda_2}\cdots Q_{\lambda_l}.\mathbbm{1}\\
=&2t^{\lambda_1-k}q_{\lambda_1-k}Q_{\lambda_2}\cdots Q_{\lambda_l}.\mathbbm{1}-2t^{\lambda_2-k+1}q_{\lambda_2-k+1}Q_{\lambda_1-1}Q_{\lambda_3}\cdots Q_{\lambda_l}.\mathbbm{1}\\
+&\cdots+(-1)^{l+1}2t^{\lambda_l-k+1}q_{\lambda_l-k+1}Q_{\lambda_1-1}Q_{\lambda_2}\cdots Q_{\lambda_{l-1}}.\mathbbm{1}
\end{align*}
and
\begin{align*}
&Q_{\lambda_1-1}G^*_{k-1}Q_{\lambda_2}\cdots Q_{\lambda_l}.\mathbbm{1}\\
=&Q_{\lambda_1-1}(2t^{\lambda_2-k+1}q_{\lambda_2-k+1}Q_{\lambda_3}\cdots Q_{\lambda_l}.\mathbbm{1}
-2t^{\lambda_3-k+1}q_{\lambda_3-k+1}Q_{\lambda_2}Q_{\lambda_4}\cdots Q_{\lambda_l}.\mathbbm{1}\\
+&\cdots+(-1)^{l}2t^{\lambda_l-k+1}q_{\lambda_l-k+1}Q_{\lambda_2}\cdots Q_{\lambda_{l-1}}.\mathbbm{1})
\end{align*}
It follows from \eqref{e:rel1} that
\begin{align*}
&G^*_{k}Q_{\lambda_1}Q_{\lambda_2}\cdots Q_{\lambda_l}.\mathbbm{1}\\
=&Q_{\lambda_1}G^*_{k}Q_{\lambda_2}\cdots Q_{\lambda_l}.\mathbbm{1}+t^{-1}G^*_{k-1}Q_{\lambda_1-1}Q_{\lambda_2}\cdots Q_{\lambda_l}.\mathbbm{1}\\
+&t^{-1}Q_{\lambda_1-1}G^*_{k-1}Q_{\lambda_2}\cdots Q_{\lambda_l}.\mathbbm{1}
-2t^{\lambda_1-k-1}(1-t)q_{\lambda_1-k}Q_{\lambda_2}\cdots Q_{\lambda_l}.\mathbbm{1} .
\end{align*}
Expanding $G^*_{k}Q_{\lambda_2}.\mathbbm{1}$ , $G^*_{k-1}Q_{\lambda_1-1}.\mathbbm{1}$ and $Q_{\lambda_1-1}G^*_{k-1}.\mathbbm{1}$ by induction, we then obtain
\begin{align*}
&G^*_{k}Q_{\lambda_1}Q_{\lambda_2}\cdots Q_{\lambda_l}.\mathbbm{1}\\
=&Q_{\lambda_1}(2t^{\lambda_2-k}q_{\lambda_2-k}Q_{\lambda_3}\cdots Q_{\lambda_l}.\mathbbm{1}
-2t^{\lambda_3-k}q_{\lambda_3-k}Q_{\lambda_2}Q_{\lambda_4}\cdots Q_{\lambda_l}.\mathbbm{1}\\
+&\cdots+(-1)^{l}2t^{\lambda_l-k}q_{\lambda_l-k}Q_{\lambda_2}\cdots Q_{\lambda_{l-1}}.\mathbbm{1})\\
+&t^{-1}(2t^{\lambda_1-k}q_{\lambda_1-k}Q_{\lambda_2}\cdots
Q_{\lambda_l}.\mathbbm{1}-2t^{\lambda_2-k+1}q_{\lambda_2-k+1}Q_{\lambda_1-1}Q_{\lambda_3}\cdots Q_{\lambda_l}.\mathbbm{1}\\
+&\cdots+(-1)^{l+1}2t^{\lambda_l-k+1}q_{\lambda_l-k+1}Q_{\lambda_1-1}Q_{\lambda_2}\cdots Q_{\lambda_{l-1}}.\mathbbm{1})\\
+&t^{-1}Q_{\lambda_1-1}(2t^{\lambda_2-k+1}q_{\lambda_2-k+1}Q_{\lambda_3}\cdots Q_{\lambda_l}.\mathbbm{1}-2t^{\lambda_3-k+1}q_{\lambda_3-k+1}Q_{\lambda_2}Q_{\lambda_4}\cdots Q_{\lambda_l}.\mathbbm{1}\\
+&\cdots+(-1)^{l}2t^{\lambda_l-k+1}q_{\lambda_l-k+1}Q_{\lambda_2}\cdots Q_{\lambda_{l-1}}.\mathbbm{1})\\
-&2t^{\lambda_1-k-1}(1-t)q_{\lambda_1-k}Q_{\lambda_2}\cdots Q_{\lambda_l}.\mathbbm{1}
\end{align*}
Using the commutation relation \eqref{e:rel3}, we then compute that
\begin{align*}
&Q_{\lambda_1-1}(2t^{\lambda_2-k+1}q_{\lambda_2-k+1}Q_{\lambda_3}\cdots Q_{\lambda_l}.\mathbbm{1}\\
&\qquad\qquad +\cdots+(-1)^{l}2t^{\lambda_l-k+1}q_{\lambda_l-k+1}Q_{\lambda_2}\cdots Q_{\lambda_{l-1}}.\mathbbm{1})\\
=&t^{-1}(2t^{\lambda_2-k+1}q_{\lambda_2-k+1}Q_{\lambda_1-1}Q_{\lambda_3}\cdots Q_{\lambda_l}.\mathbbm{1}-2t^{\lambda_2-k+1}Q_{\lambda_1}q_{\lambda_2-k}Q_{\lambda_3}\cdots Q_{\lambda_l}.\mathbbm{1}\\
&-2t^{\lambda_2-k+1}q_{\lambda_2-k}Q_{\lambda_1}Q_{\lambda_3}\cdots Q_{\lambda_l}.\mathbbm{1})\\
&-t^{-1}(2t^{\lambda_3-k+1}q_{\lambda_3-k+1}Q_{\lambda_1-1}Q_{\lambda_2}Q_{\lambda_4}\cdots Q_{\lambda_l}.\mathbbm{1}\\
&-2t^{\lambda_3-k+1}Q_{\lambda_1}q_{\lambda_3-k}Q_{\lambda_2}Q_{\lambda_4}\cdots Q_{\lambda_l}.\mathbbm{1}\\
&-2t^{\lambda_3-k+1}q_{\lambda_3-k}Q_{\lambda_1}Q_{\lambda_2}Q_{\lambda_4}\cdots Q_{\lambda_l}.\mathbbm{1})\\
&+\cdots+(-1)^lt^{-1}(2t^{\lambda_l-k+1}q_{\lambda_l-k+1}Q_{\lambda_1-1}Q_{\lambda_2}\cdots Q_{\lambda_{l-1}}.\mathbbm{1}\\
&-2t^{\lambda_l-k+1}Q_{\lambda_1}q_{\lambda_l-k}Q_{\lambda_2}\cdots Q_{\lambda_{l-1}}.\mathbbm{1}\\
&-2t^{\lambda_l-k+1}q_{\lambda_l-k}Q_{\lambda_1}Q_{\lambda_2}\cdots Q_{\lambda_{l-1}}.\mathbbm{1})
\end{align*}
which can be simplified to
\begin{align*}
&G^*_{k}Q_{\lambda_1}Q_{\lambda_2}\cdots Q_{\lambda_l}.\mathbbm{1}\\
=&2t^{\lambda_1-k}q_{\lambda_1-k}Q_{\lambda_2}\cdots Q_{\lambda_l}.\mathbbm{1}-2t^{\lambda_2-k}q_{\lambda_2-k}Q_{\lambda_1}\cdots Q_{\lambda_l}.\mathbbm{1}\\
+&\cdots+2(-1)^{l-1}t^{\lambda_l-k}q_{\lambda_l-k}Q_{\lambda_2}\cdots Q_{\lambda_{l-1}}.\mathbbm{1}\\
=&\sum\limits_{i=1}^{l} (-1)^{i-1}2t^{\lambda_i-k}q_{\lambda_i-k}Q_{\lambda^{\hat{i}}}.\mathbbm{1}.
\end{align*}
\end{proof}

\begin{exmp}
For $\lambda=(4,1)$ and $\mu=(3,2)$ , we have
\begin{align*}
L_{\lambda\mu}(t)&=2^{-l(\lambda)}\langle G_3G_2.1, Q_4Q_1.1 \rangle\\
&=\frac{1}{4}\langle G_2.1, 2tq_1Q_1.1 \rangle\\
&=\frac{1}{4}\langle (G_2.1, 4tQ_2.1 \rangle\\
&=2t.
\end{align*}
\end{exmp}

Theorem \ref{t:G*Q} gives an algebraic iterative formula for $L_{\lambda\mu}(t).$ Let us recall the Pieri formula for Schur's $Q$-function. For two partitions $\mu\subset\lambda$, the set-theoretic difference $\theta=\lambda-\mu$ is called a {\it skew diagram} denoted by $\lambda/\mu$. A skew diagram $\lambda/\mu$ is a vertical (resp. horizontal) strip if each row (resp. column) contains at most one box (see \cite{M}). Let $\lambda, \mu$ be strict partitions such that $\mu\subset\lambda$, and $\lambda/\mu$ is a horizontal strip. Denote by $a(\lambda/\mu)$ the number of integers $i\geq1$ such that $\lambda/\mu$ has a square in the $i$th column but not in the $(i+1)$st column. Then we have
\begin{align}\label{e:pieri}
2^{-l(\mu)}Q_{\mu}q_{r}=\sum\limits_{\lambda}2^{a(\lambda/\mu)}2^{-l(\lambda)}Q_{\lambda}
\end{align}
where the sum runs over strict partitions $\lambda\supset\mu$ such that $\lambda/\mu$ are horizontal $r$-strips.

\begin{thm}\label{t:Literative}
For given strict partitions $\lambda, \mu$ of $n$ we have an algebraic iterative formula for $L_{\lambda\mu}(t)$:
\begin{align}\label{e:iterative}
L_{\lambda\mu}(t)=\sum\limits_{\lambda_i\geq \mu_1}\sum\limits_{\xi}(-1)^{i-1}2^{a(\xi/\lambda^{\hat{i}})}t^{\lambda_i-\mu_1}L_{\xi\mu^{\hat{1}}}(t)
\end{align}
where the second sum runs over strict partitions $\xi\supset\lambda^{\hat{i}}$ such that $\xi/\lambda^{\hat{i}}$ are horizontal $\mu_1$-strips.
\end{thm}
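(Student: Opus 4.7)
The plan is to combine the inner-product formulation of $L_{\lambda\mu}(t)$, the adjoint property of $G^*_m$, the expansion in Theorem \ref{t:G*Q}, and the Pieri rule for Schur's $Q$-functions in order to peel off the largest part $\mu_1$ of $\mu$ and obtain a recursion on $\mu$.

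First I would split $G_\mu.\mathbbm{1} = G_{\mu_1}G_{\mu^{\hat 1}}.\mathbbm{1}$ inside the pairing $L_{\lambda\mu}(t) = 2^{-l(\lambda)}\langle G_\mu.\mathbbm{1},\, Q_\lambda.\mathbbm{1}\rangle$ and move $G_{\mu_1}$ across via its adjoint $G^*_{\mu_1}$, obtaining
$$L_{\lambda\mu}(t) = 2^{-l(\lambda)}\bigl\langle G_{\mu^{\hat 1}}.\mathbbm{1},\; G^*_{\mu_1}Q_\lambda.\mathbbm{1}\bigr\rangle.$$
Theorem \ref{t:G*Q} then supplies a closed-form expansion of the right slot:
$$G^*_{\mu_1}Q_\lambda.\mathbbm{1} = \sum_{i:\,\lambda_i\geq\mu_1}(-1)^{i-1}\,2\,t^{\lambda_i-\mu_1}\,q_{\lambda_i-\mu_1}Q_{\lambda^{\hat i}}.\mathbbm{1},$$
the range being forced by the convention $q_k = 0$ for $k<0$.

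Next I would invoke the Pieri rule \eqref{e:pieri} to decompose each $q_{\lambda_i-\mu_1}Q_{\lambda^{\hat i}}.\mathbbm{1}$, which (since $q_n$ acts by multiplication by Schur's $Q_{(n)}$) is simply the polynomial product $Q_{(\lambda_i-\mu_1)}\cdot Q_{\lambda^{\hat i}}$. This yields
$$q_{\lambda_i-\mu_1}Q_{\lambda^{\hat i}}.\mathbbm{1} = \sum_{\xi}2^{\,a(\xi/\lambda^{\hat i}) + l(\lambda^{\hat i}) - l(\xi)}\,Q_\xi.\mathbbm{1},$$
the sum running over strict $\xi\supset\lambda^{\hat i}$ with $\xi/\lambda^{\hat i}$ a horizontal $(\lambda_i-\mu_1)$-strip. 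Finally, the identity $\langle G_{\mu^{\hat 1}}.\mathbbm{1},\, Q_\xi.\mathbbm{1}\rangle = 2^{l(\xi)}L_{\xi\mu^{\hat 1}}(t)$, read off from \eqref{e:form} and \eqref{e:defL}, produces the recursive term $L_{\xi\mu^{\hat 1}}(t)$.

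The only delicate point will be the bookkeeping of the several powers of $2$: the $2^{-l(\lambda)}$ from the formula for $L$, the factor $2$ from Theorem \ref{t:G*Q}, the $2^{a+l(\lambda^{\hat i})-l(\xi)}$ from Pieri, and the $2^{l(\xi)}$ from orthogonality. Using $l(\lambda^{\hat i}) = l(\lambda)-1$, these collapse to
$$2^{-l(\lambda)}\cdot 2\cdot 2^{\,a(\xi/\lambda^{\hat i})+l(\lambda)-1-l(\xi)}\cdot 2^{l(\xi)} \;=\; 2^{a(\xi/\lambda^{\hat i})},$$
which is exactly the constant appearing in \eqref{e:iterative}. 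No input beyond what is already in the paper is required.
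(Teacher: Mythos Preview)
Your proof is correct and takes essentially the same approach as the paper: express $L_{\lambda\mu}(t)$ as a matrix coefficient, pass $G_{\mu_1}$ across via its adjoint $G^*_{\mu_1}$, apply Theorem~\ref{t:G*Q} and then the Pieri rule~\eqref{e:pieri}, and recognize the resulting inner products as $2^{l(\xi)}L_{\xi\mu^{\hat 1}}(t)$. Your explicit bookkeeping of the powers of $2$ is more detailed than the paper's terse ``follows from,'' and you correctly note that the horizontal strips $\xi/\lambda^{\hat i}$ have size $\lambda_i-\mu_1$ (the statement's ``$\mu_1$-strips'' is a slip, since $|\xi|=n-\mu_1$ while $|\lambda^{\hat i}|=n-\lambda_i$).
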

\begin{proof} The $L_{\lambda\mu}(t)$ can be expressed as matrix coefficients of vertex operators:
\begin{align*}
L_{\lambda\mu}(t)&=2^{-l(\lambda)}\langle G_{\mu}.\mathbbm{1}, Q_{\lambda}.\mathbbm{1} \rangle\\
&=2^{-l(\lambda)}\langle G_{\mu^{\hat{1}}}.\mathbbm{1}, G^*_{\mu_1}Q_{\lambda}.\mathbbm{1} \rangle.
\end{align*}
Then \eqref{e:iterative} follows from \eqref{e:G*Q}, \eqref{e:pieri} and the definition of $L_{\xi\mu^{\hat{1}}}(t)$.
\end{proof}

We remark that Tudose and Zabrocki obtained another iterative formula similar to Morris' iteration \cite{TZ}.
\begin{exmp}\label{t:tworows}
Let $\lambda, \mu=(\mu_1,\mu_2)$ be two strict partitions of $n$, then we have
\begin{align}\label{e:tworows}
L_{\lambda(\mu_1,\mu_2)}(t)&=
\begin{cases}
2^{\delta_{\lambda,\mu}}t^{\lambda_1-\mu_1}& \text{if $\lambda\geq(\mu_1,\mu_2)$}\\
0& \text{if others.}
\end{cases}
\end{align}
\end{exmp}

\begin{cor}
Let $\mu \leq \lambda$ in the dominance order. \\
(1) If $n>\lambda_{1}$, then $L_{(n ,\lambda)(n ,\mu)}(t)=L_{\lambda\mu}(t)$. \\
(2) $L_{\lambda\lambda}(t)=1 $ and $L_{(|\lambda|)\lambda}(t)=2^{l(\lambda)-1}t^{n(\lambda)}$. \\
(3) $2^{l(\mu)-l(\lambda)} \mid L_{\lambda\mu}(t)$. \\
(4) $degL_{\lambda\mu}(t)=n(\mu)-n(\lambda)$.
\end{cor}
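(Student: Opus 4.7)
The plan is to prove (1) by a direct vertex-operator computation, and (2)--(4) by induction on $l(\mu)$ via Theorem~\ref{t:Literative}, with base case $L_{\emptyset\emptyset}=1$. For (1), I express
\[
L_{(n,\lambda)(n,\mu)}(t)=2^{-l(\lambda)-1}\langle G_\mu.\mathbbm{1},\,G^*_nQ_nQ_\lambda.\mathbbm{1}\rangle
\]
and apply Theorem~\ref{t:G*Q} to the augmented partition $(n,\lambda_1,\ldots,\lambda_{l(\lambda)})$. Because $n>\lambda_j$ for every $j\geq 1$, each summand in \eqref{e:G*Q} indexed by $i\geq 2$ carries a factor $q_{\lambda_{i-1}-n}=0$; only the $i=1$ term survives and contributes $2\,q_0\,Q_\lambda.\mathbbm{1}=2Q_\lambda.\mathbbm{1}$. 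Absorbing the factor of $2$ into $2^{-l(\lambda)-1}$ recovers $L_{\lambda\mu}(t)$.

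For (2a), specializing the iteration to $\mu=\lambda$ eliminates every summand except $i=1$ (strictness of $\lambda$ prevents $\lambda_i\geq\lambda_1$ for $i\geq 2$), and the $0$-strip forces $\xi=\lambda^{\hat{1}}$ with $a=0$, so $L_{\lambda\lambda}(t)=L_{\lambda^{\hat{1}}\lambda^{\hat{1}}}(t)$ and induction on $l(\lambda)$ closes the argument. For (2b), the iteration on $L_{(|\lambda|)\lambda}$ likewise has a unique valid $\xi$, namely the one-row strict partition of size $|\lambda|-\lambda_1$ with $a=1$, producing $L_{(|\lambda|)\lambda}(t)=2t^{|\lambda|-\lambda_1}L_{(|\lambda^{\hat{1}}|)\lambda^{\hat{1}}}(t)$; iterating down through $\lambda$ assembles the factor $2^{l(\lambda)-1}$ and the telescoping exponent $\sum_{k\geq 2}(k-1)\lambda_k=n(\lambda)$.

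For (3), invoking the inductive divisibility $2^{l(\mu^{\hat{1}})-l(\xi)}\mid L_{\xi\mu^{\hat{1}}}(t)$ on each summand, the total $2$-power becomes $2^{a(\xi/\lambda^{\hat{i}})+l(\mu)-1-l(\xi)}$, so the desired divisibility reduces to the combinatorial inequality $a(\xi/\lambda^{\hat{i}})\geq l(\xi)-l(\lambda^{\hat{i}})$ for horizontal strips between strict partitions. The skew-block of a new row of $\xi$ begins at column $1$, so at most one new row can appear (two would collide at column $1$ and break the horizontal condition); hence $l(\xi)-l(\lambda^{\hat{i}})\in\{0,1\}$. The case $0$ is trivial, and in the case $1$ the skew is nonempty, so it admits at least one right-end column.

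For (4), the inductive bound $\deg L_{\xi\mu^{\hat{1}}}(t)\leq n(\mu^{\hat{1}})-n(\xi)$ together with $n(\xi)\geq n(\lambda^{\hat{i}})$ and the identity $n(\lambda)-n(\lambda^{\hat{i}})=(i-1)\lambda_i+|\lambda|-\sum_{j\leq i}\lambda_j$ yields each summand's degree at most $n(\mu)-n(\lambda)-(\sum_{j\leq i}\lambda_j-i\lambda_i)\leq n(\mu)-n(\lambda)$, and the correction is strictly positive for $i\geq 2$ by strictness of $\lambda$. Equality therefore forces $i=1$ and $n(\xi)=n(\lambda^{\hat{1}})$, which uniquely pins down $\xi=(\lambda_2+\lambda_1-\mu_1,\lambda_3,\ldots,\lambda_{l(\lambda)})$; this is a strict partition because $\mu_1\leq\lambda_1<\lambda_1+\lambda_2-\lambda_3$ (using $\lambda\geq\mu$ and strictness of $\lambda$), and it satisfies $\xi\geq\mu^{\hat{1}}$ in dominance by inheritance from $\lambda\geq\mu$. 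The main obstacle is ruling out cancellation at the extremal degree; this is settled by uniqueness of the extremal $(i,\xi)$-pair together with the inductive non-vanishing of $L_{\xi\mu^{\hat{1}}}(t)$ at its top degree.
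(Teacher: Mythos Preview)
Your proposal is correct and follows essentially the same approach as the paper, which states only that (1) and (2) are straightforward consequences of the iterative formula \eqref{e:iterative} and that (3) and (4) are proved by induction on $l(\mu)$ using \eqref{e:iterative}. Your write-up supplies the details the paper omits: the horizontal-strip bound $l(\xi)-l(\lambda^{\hat{i}})\in\{0,1\}$ needed for (3), and the identification of the unique extremal summand $(i,\xi)=(1,(\lambda_1+\lambda_2-\mu_1,\lambda_3,\ldots))$ for (4). The only deviation is that for (1) you invoke Theorem~\ref{t:G*Q} directly rather than its packaged form \eqref{e:iterative}, but since the latter is derived from the former this is the same computation.
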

\begin{proof}
(1) and (2) are straightforward consequences of \eqref{e:iterative}. (3) and (4) are proved by induction on $l(\mu)$ using \eqref{e:iterative}.
\end{proof}

Recall that the Kostka-Foulkes polynomials have the stability property (see \cite{BJ}) and we have the following Q-analog, which proves a conjecture in \cite[Conj. 10]{TZ}.
\begin{prop}\label{t:stability}
Let $\lambda=(\lambda_1,\cdots,\lambda_l), \mu=(\mu_1,\cdots,\mu_m)$ be two strict partitions and $\mu_1 \geq \lambda_2$, then for any $r\geq1$, we have the stability
\begin{align}
L_{\lambda+(r),\mu+(r)}(t)=L_{\lambda\mu}(t).
\end{align}
where $\lambda+(r)=(\lambda_1+r,\lambda_2,\cdots)$.
\end{prop}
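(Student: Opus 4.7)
The plan is to apply the iterative formula of Theorem \ref{t:Literative} (equivalently, the operator expansion of Theorem \ref{t:G*Q}) to both sides of the identity, iterating on the first part of $\mu$ and of $\mu+(r)$. The hypothesis $\mu_1\geq \lambda_2$ together with $r\geq 1$ should force each iteration to collapse to a single term indexed by $i=1$, producing identical sums.

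Expanding $L_{\lambda+(r),\mu+(r)}(t) = 2^{-l(\lambda)}\langle G_{\mu^{\hat 1}}\mathbbm{1}, G^*_{\mu_1+r}Q_{\lambda+(r)}\mathbbm{1}\rangle$ via Theorem \ref{t:G*Q}, every term with $i\geq 2$ carries a factor $q_{\lambda_i-\mu_1-r}$; this vanishes because $\lambda_i\leq \lambda_2\leq \mu_1$ together with $r\geq 1$ forces the subscript to be strictly negative. So only $i=1$ contributes, and since $(\lambda+(r))^{\hat 1}=\lambda^{\hat 1}$, $(\mu+(r))^{\hat 1}=\mu^{\hat 1}$, and the $t$-exponent becomes $\lambda_1-\mu_1$, we obtain
\[
L_{\lambda+(r),\mu+(r)}(t)=t^{\lambda_1-\mu_1}\sum_{\xi} 2^{a(\xi/\lambda^{\hat 1})}L_{\xi,\mu^{\hat 1}}(t),
\]
summed over strict partitions $\xi\supset\lambda^{\hat 1}$ such that $\xi/\lambda^{\hat 1}$ is a horizontal $(\lambda_1-\mu_1)$-strip.

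An identical expansion of $L_{\lambda,\mu}(t)$ yields the same formula once one observes that for $i\geq 2$ the factor $q_{\lambda_i-\mu_1}$ vanishes whenever $\lambda_i<\mu_1$. Under the strict inequality $\mu_1>\lambda_2$ every $i\geq 2$ term drops and the displayed sum is recovered exactly. The principal obstacle is the boundary case $\mu_1=\lambda_2$: here the $i=2$ term with horizontal $0$-strip $\xi=\lambda^{\hat 2}$ produces an apparent residual $-L_{\lambda^{\hat 2},\mu^{\hat 1}}(t)$. I would handle it by a secondary induction on $l(\mu)$, iterating the formula once more on both $L_{\lambda^{\hat 2},\mu^{\hat 1}}(t)$ and on the next step of the shifted expansion to exhibit a pairwise cancellation of the residual corrections. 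Managing this boundary bookkeeping is the main technical difficulty I anticipate.
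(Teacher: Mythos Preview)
Your plan is exactly the paper's plan: apply Theorem~\ref{t:G*Q} once on each side so that (modulo the sign/index typo in the paper's displayed inner product) both $L_{\lambda+(r),\mu+(r)}(t)$ and $L_{\lambda\mu}(t)$ reduce to
\[
2^{-l(\lambda)}\bigl\langle G_{\mu^{\hat 1}}.\mathbbm{1},\ 2t^{\lambda_1-\mu_1}q_{\lambda_1-\mu_1}Q_{\lambda^{\hat 1}}.\mathbbm{1}\bigr\rangle,
\]
the point being that for $i\ge 2$ the factor $q_{\lambda_i-\mu_1}$ (resp.\ $q_{\lambda_i-\mu_1-r}$) vanishes. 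The paper does not discuss the boundary $\mu_1=\lambda_2$ at all; you are right that there $q_{\lambda_2-\mu_1}=q_0=1$ survives in the unshifted expansion but not in the shifted one.

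However, the secondary induction you propose cannot close this gap, because the asserted identity is in fact \emph{false} at the boundary. Take $\lambda=(5,4)$ and $\mu=(4,3,2)$, so $\mu_1=\lambda_2=4$. Using Theorem~\ref{t:Literative} together with the values $L_{(5),(3,2)}=2t^2$ and $L_{(4,1),(3,2)}=2t$ quoted in the paper, one computes
\[
L_{(5,4),(4,3,2)}(t)=4t^3+2t^2,\qquad L_{(6,4),(5,3,2)}(t)=4t^3+4t^2,
\]
and the discrepancy is precisely the residual $-L_{\lambda^{\hat 2},\mu^{\hat 1}}(t)=-L_{(5),(3,2)}(t)=-2t^2$ that your analysis isolates. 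So the hypothesis in the proposition should read $\mu_1>\lambda_2$ (strict inequality). Under that corrected hypothesis your argument, and the paper's, is complete with no extra bookkeeping; under $\mu_1=\lambda_2$ no argument can work.
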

\begin{proof}
By Theorem \ref{t:G*Q}, we can compute $L_{\lambda+(r),\mu+(r)}(t)$ and $L_{\lambda\mu}(t)$ respectively and they are both equal to
\begin{align*}
\langle G_{\mu_2}G_{\mu_3}\cdots G_{\mu_m}.\mathbbm{1}, 2t^{\mu_1-\lambda_1}q_{\mu_1-\lambda_1}Q_{\lambda_2}\cdots Q_{\lambda_l}.\mathbbm{1} \rangle.
\end{align*}
\end{proof}

\section{Spin Green polynomials}
In this section, we introduce Q-analog Green's polynomials and show that they
enjoy similar favorable properties parallel to Green's polynomials. As an application
we also derive iterative formulas for the spin characters of the symmetric group $\mathfrak S_n$.

Recall Green's polynomials form the transition matrix $X^{\lambda}_{\mu}(t)$ between the Hall-Littlewood functions $Q_{\lambda}(x;t)$ and the power-sum product $p_{\mu}(x)$:
\begin{align}\label{e:X}
Q_{\lambda}(x;t)=\sum_{\mu}z_{\mu}(t)^{-1}X^{\lambda}_{\mu}(t)p_{\mu}(x).
\end{align}

Historically Green's polynomials $Q^{\lambda}_{\mu}(q)$ are defined by
\begin{align*}
Q^{\lambda}_{\mu}(q)=q^{n(\lambda)}X^{\lambda}_{\mu}(q^{-1}).
\end{align*}
In particular, $X^{\lambda}_{\mu}(0)=\chi^{\lambda}_{\mu}$,
the value of the irreducible character $\chi^{\lambda}$ of the symmetric group $\mathfrak{S}_n$ at the elements of cycle-type $\mu$.

We introduce the spin Green polynomials $Y^{\lambda}_{\mu}(t)$ as the spin analog of \eqref{e:X}: the transition matrix between the $Q$-Hall-Littlewood functions and the power-sum product, i.e. ,
\begin{align}
G_{\lambda}(x;t)=\sum_{\mu\in\mathcal{OP}_n}z_{\mu}^{-1}2^{l(\mu)}Y^{\lambda}_{\mu}(t)p_{\mu}(x).
\end{align}

Let $\zeta^{\lambda}_{\mu}$ be the irreducible negative character of the double cover $\tilde{\mathfrak{S}}_n$, indexed by $\lambda\in\mathcal{SP}_n$ and evaluated at the conjugacy class $\mu\in\mathcal{OP}_n$. The $\zeta^{\lambda}_{\mu}$ can be determined by the following Frobenius type formula \cite{S, HH}:
\begin{align}
Q_{\lambda}(x)=\sum_{\mu\in\mathcal{OP}_n}z_{\mu}^{-1}2^{\frac{l(\lambda)+l(\mu)+\epsilon(\lambda)}{2}}\zeta^{\lambda}_{\mu}p_{\mu}(x),
\end{align}
where $\epsilon(\lambda)=
\begin{cases}
0&\text{if $n-l(\lambda)\equiv0$ (mod 2)},\\
1&\text{if $n-l(\lambda)\not\equiv0$ (mod 2)}.
\end{cases}$

Similar to the Green polynomials, the spin Green polynomials $Y^{\lambda}_{\mu}(t)$ can be viewed as $t$-deformation of the spin characters $\zeta^{\lambda}_{\mu}$ (up to powers of $2$) since $G_{\lambda}(x;0)=Q_{\lambda}(x)$. Indeed, we have $Y^{\lambda}_{\mu}(0)=2^{\frac{l(\lambda)-l(\mu)+\epsilon(\lambda)}{2}}\zeta^{\lambda}_{\mu}.$

We can write the spin Green polynomials $Y^{\lambda}_{\mu}(t)$ in terms of the matrix coefficient:
\begin{align*}
Y^{\lambda}_{\mu}(t)=\langle G_{\lambda}(x;t), p_{\mu}(x) \rangle.
\end{align*}
Therefore, 
$Y^{\lambda}_{\mu}(t)=\langle G_{\lambda}.\mathbbm{1}, p_{\mu} \rangle.$ To compute $Y^{\lambda}_{\mu}(t)$, we introduce the vertex operators $P(z)$ and $P^*(z)$ on the space $\Gamma$ as the following  maps: $\Gamma\longrightarrow \Gamma[[z, z^{-1}]]$ defined by
\begin{align}
P(z)&=\sum_{n\geq 1,odd}p_{n}z^n \\
P^*(z)&=\sum_{n\geq 1,odd}p^*_{n}z^{-n}
\end{align}

By the techniques of vertex operators developed in section 2, we have the following by \eqref{e:Qhallop} and \eqref{e:Qhallop*}:
\begin{align}
G^*(z)P(w)&=\sum_{n\geq 1, odd}(\frac{w}{z})^{n}G^*(z)+P(w)G^*(z) \\
P^*(w)G(z)&=\sum_{n\geq 1,odd}(\frac{z}{w})^{n}G(z)+G(z)P^*(w)
\end{align}

Therefore, their components satisfy the following commutation relations:
\begin{align}\label{e:G*mpn}
G^*_mp_n&=G^*_{m-n}+p_nG^*_m \\
p^*_mG_n&=G_{n-m}+G_np^*_m
\end{align}

Denote by $\nu\lhd\lambda$ if $\nu$ is a subpartition of $\lambda=(\lambda_1,\lambda_2,\cdots,\lambda_l)$, i.e., $\nu=(\lambda_{i_1},\lambda_{i_2},\cdots,\lambda_{i_k})$ for some $1\leq i_1<i_2<\cdots <i_k\leq l.$ Note that $\nu$ could be $\emptyset$ or $\lambda.$ Let $\{\lambda\}_i$ be the set $\{\tau\lhd \lambda\mid|\tau|=i\}$ and $D^{(i)}(\lambda)={\rm Card}\{\lambda\}_i.$ Define $D_t(\lambda)=\sum_{i\geq0}D^{(i)}(\lambda)t^i.$ The following results are useful in our subsequent discussion.
\begin{lem}\label{t:twoidentities}
\cite{JL, JL2} Let $\lambda$ be any partition, then
\begin{align}
D_t(\lambda)&=\prod_{i\geq1}[2]^{m_i(\lambda)}_{t^i}\\
\sum\limits_{\rho \in\mathcal{OP}_n}\frac{(-2)^{l(\rho)}}{z_{\rho}(t)}&=
\begin{cases}
2(t-1)(n)_t&\text{if $n\geq 1$},\\
1&\text{if $n=0$}.
\end{cases}
\end{align}
\end{lem}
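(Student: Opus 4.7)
The plan is to prove the two identities separately by direct combinatorial and generating-function arguments.

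For the first identity, I would unpack the definition $D_t(\lambda)=\sum_{i\geq 0}D^{(i)}(\lambda)t^i$. Since a subpartition $\tau\lhd\lambda$ is by definition specified by a choice of indices $1\leq i_1<\cdots<i_k\leq l(\lambda)$, enumerating over $\{\lambda\}_i$ for all $i$ amounts to summing over subsets $S\subseteq\{1,\ldots,l(\lambda)\}$ weighted by $t^{\sum_{j\in S}\lambda_j}$. This factorizes immediately as $\prod_{j=1}^{l(\lambda)}(1+t^{\lambda_j})$; grouping equal parts and using $1+t^i=[2]_{t^i}$ gives the claimed formula $\prod_{i\geq 1}[2]_{t^i}^{m_i(\lambda)}$.

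For the second identity, I would view the sum as the coefficient of $z^n$ in a single exponential generating function. Reading off $\frac{1}{z_\rho(t)}=\frac{1}{z_\rho}\prod_i(1-t^i)^{m_i(\rho)}$ by comparing \eqref{e:G*n} with the vertex-operator definition \eqref{e:Qhallop*}, one obtains
\[
\sum_{n\geq 0}\Bigl(\sum_{\rho\in\mathcal{OP}_n}\frac{(-2)^{l(\rho)}}{z_\rho(t)}\Bigr)z^n \;=\; \exp\Bigl(\sum_{i\geq 1,\,\text{odd}}\frac{2(t^i-1)}{i}z^i\Bigr).
\]
Applying the standard identity $\sum_{i\geq 1,\,\text{odd}}\frac{2x^i}{i}=\log\frac{1+x}{1-x}$ (with $x=tz$ and $x=z$) collapses the right-hand side to the rational function $\frac{(1+tz)(1-z)}{(1-tz)(1+z)}$, which rewrites as $1+\frac{2(t-1)z}{(1-tz)(1+z)}$. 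Partial fractions give $\frac{1}{(1-tz)(1+z)}=\frac{1}{t+1}\bigl(\frac{t}{1-tz}+\frac{1}{1+z}\bigr)$, and extracting the coefficient of $z^n$ for $n\geq 1$ produces $\frac{2(t-1)(t^n-(-1)^n)}{t+1}=2(t-1)(n)_t$, while the $z^0$ coefficient is $1$, matching the claim.

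The only genuine obstacle is the bookkeeping of signs and factors of $2$ when identifying $z_\rho(t)$ from the vertex-operator formula; once that is pinned down, the argument reduces to a textbook partial-fraction expansion. Since the lemma is attributed to \cite{JL, JL2}, one can alternatively just invoke those references, but I would still prefer to include the short self-contained sketch above for the reader's convenience.
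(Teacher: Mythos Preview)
Your proposal is correct. Both identities are proved cleanly: the first by recognizing that choosing a subpartition $\tau\lhd\lambda$ is exactly choosing a subset of the index set, so $D_t(\lambda)=\prod_{j=1}^{l(\lambda)}(1+t^{\lambda_j})$; the second by reading off the generating function from the vertex operator $G^*(z).\mathbbm{1}$ specialized at $p_n\mapsto 1$, collapsing the exponential via $\sum_{i\text{ odd}}\tfrac{2x^i}{i}=\log\tfrac{1+x}{1-x}$, and extracting coefficients by partial fractions. Your identification $z_\rho(t)=z_\rho\prod_i(1-t^i)^{-m_i(\rho)}$ is the standard one from Macdonald and is indeed what \eqref{e:G*n} and \eqref{e:Qhallop*} encode, so the ``bookkeeping'' concern you flag is not an obstacle.

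As for comparison with the paper: the paper does not prove this lemma at all---it simply quotes the result from \cite{JL, JL2}. So your self-contained sketch actually supplies more than the paper does. Including it is a reasonable choice, since the argument is short and keeps the exposition independent of the cited works; on the other hand, since the paper treats the lemma purely as an imported tool, merely citing \cite{JL, JL2} (as you also suggest) would be equally acceptable and matches what the authors chose to do.
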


We collect some properties of $X^{\lambda}_{\mu}(t)$ as follows.
\begin{prop}\label{t:Green}\cite{JL, M}
Suppose $\lambda, \mu\in\mathcal{P}_n.$ Then the Green polynomial $X^{\lambda}_{\mu}(t)$ satisfies the following properties:\vspace{0.25cm}\\
(1) $X^{\lambda}_{\mu}(t)=\sum_{\nu\in\mathcal{P}_n}K_{\nu\lambda}(t)X^{\nu}_{\mu}(0).$\\
(2) $X^{\lambda}_{\mu}(t)$ is monic of degree $n(\mu).$\\
(3) $X^{\lambda}_{\mu}(0)=\chi^{\lambda}_{\mu}.$\\
(4) $X^{(n)}_{\mu}(t)=1.$\\
(5) $X^{(k, n-k)}_{\mu}(t)=(t-1)[D_t(\mu)t^{-k-1}]_++D^{(n-k)}(\mu).$\\
(6) $X^{\la}_{\mu}(t)=\sum\limits_{i=0}^{n-\la_{1}}\sum\limits_{\tau\in\{\mu\}_i}\sum\limits_{\rho\vdash(n-\la_{1}-i)}\frac{(-1)^{l(\rho)}}{z_{\rho}(t)}X^{\la^{[1]}}_{\tau\cup \rho}(t),$\\
where $[f(t)]_{+}$ is the regular part of the function $f(t)$ in $t$ and $K_{\nu\lambda}(t)$ is the usual $t$-Kostka polynomials \cite{M}.
\end{prop}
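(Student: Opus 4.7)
My approach is to work throughout in the matrix-coefficient description $X^{\la}_{\mu}(t)=\langle Q_{\la}(x;t),p_{\mu}(x)\rangle=\langle H_{\la}.\mathbbm{1},p_{\mu}\rangle$, where $H(z)$ denotes the classical Hall--Littlewood vertex operator on $\Lambda$, and to mirror the techniques developed above for $Y^{\la}_{\mu}(t)$. Each of the six claims then reduces either to a standard specialization at $t=0$ or to an iteration of $H^{*}(z)$ against the power-sum vertex operator $P(w)=\sum_{n\geq 1}p_{n}w^{n}$.

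Parts (1), (3) and (4) are classical specializations. Using the triangular expansion $Q_{\la}(x;t)=\sum_{\nu}K_{\nu\la}(t)s_{\nu}(x)$ together with the Frobenius character formula $s_{\nu}=\sum_{\mu}z_{\mu}^{-1}\chi^{\nu}_{\mu}p_{\mu}$, comparison with \eqref{e:X} gives $X^{\la}_{\mu}(t)=\sum_{\nu}K_{\nu\la}(t)\chi^{\nu}_{\mu}$; and since $Q_{\nu}(x;0)=s_{\nu}(x)$ already forces $X^{\nu}_{\mu}(0)=\chi^{\nu}_{\mu}$, we obtain both (1) and (3) (the latter upon setting $t=0$ and using $K_{\nu\la}(0)=\delta_{\nu\la}$). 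Part (4) is read off directly from the one-row generating identity $\sum_{n\geq 0}Q_{(n)}(x;t)z^{n}=\sum_{\mu}z_{\mu}(t)^{-1}p_{\mu}(x)z^{|\mu|}$, which in turn comes from the one-row Hall--Littlewood vertex operator.

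Parts (6), (2) and (5) use vertex-operator iteration. Derive the commutation of $H^{*}(z)$ with $P(w)$ by the exponential normal-ordering argument of Section~2, in perfect analogy with \eqref{e:G*mpn}; taking components produces an identity of the schematic form $H^{*}_{m}p_{n}=p_{n}H^{*}_{m}+(1-t^{n})H^{*}_{m-n}$. Iterating $H^{*}_{\la_{1}}$ against the product $p_{\mu}$ then produces an alternating sum indexed by subpartitions $\tau\lhd\mu$, with a residual contribution from $\rho\vdash(n-\la_{1}-|\tau|)$ weighted by $(-1)^{l(\rho)}/z_{\rho}(t)$; reassembling these contributions gives the iterative formula (6). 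Part (2) follows by induction on $l(\la)$ from (6) using (4) as base case and tracking only the top-degree contribution of each $X^{\la^{[1]}}_{\tau\cup\rho}(t)$. Finally, (5) is extracted from (6) by specializing to the two-row partition $\la=(k,n-k)$.

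The main obstacle is part (5): one must rearrange the double sum produced by specializing (6) into the closed form $(t-1)[D_{t}(\mu)t^{-k-1}]_{+}+D^{(n-k)}(\mu)$, matching the Hall--Littlewood Pieri contributions against the regular-part operator $[\,\cdot\,]_{+}$. This step relies on the explicit evaluation of $\sum_{\rho\vdash j}(-1)^{l(\rho)}/z_{\rho}(t)$ from Lemma~\ref{t:twoidentities}(2), together with the generating-function identity for $D_{t}(\mu)$, and it is the combinatorial heart of the computation carried out in \cite{JL}.
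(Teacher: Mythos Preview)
The paper does not prove Proposition~\ref{t:Green}; it is quoted verbatim from \cite{JL,M} with no argument supplied. There is therefore nothing in the paper to compare your proposal against. What you have sketched is essentially the argument of \cite{JL}, and it is exactly the classical parallel of what the present paper \emph{does} prove for the spin case in Theorems~\ref{t:Siterative} and~\ref{t:spinG}: commute $H^{*}_{\la_{1}}$ through $p_{\mu}$ to obtain a sum over subpartitions, then use the explicit expansion of $H^{*}_{-j}.\mathbbm{1}$ in the $p_{\rho}$ basis.

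Two small corrections to your sketch. First, with the Hall--Littlewood inner product of \cite{J1} (so that $p_{n}^{*}=n\,\partial/\partial p_{n}$ up to the $t$-deformation), the commutation relation is $H^{*}_{m}p_{n}=p_{n}H^{*}_{m}+H^{*}_{m-n}$ with no $(1-t^{n})$ factor; the $(1-t^{n})$'s are absorbed into $z_{\rho}(t)$ when you expand $H^{*}_{-j}.\mathbbm{1}=\sum_{\rho\vdash j}(-1)^{l(\rho)}z_{\rho}(t)^{-1}p_{\rho}$, which is what makes the iterative formula~(6) come out without extra weights on the subpartitions $\tau$. Second, Lemma~\ref{t:twoidentities} as stated in this paper gives only the odd-partition identity with factor $(-2)^{l(\rho)}$; for part~(5) you need the all-partition version $\sum_{\rho\vdash j}(-1)^{l(\rho)}/z_{\rho}(t)=(t-1)t^{j-1}$ for $j\geq 1$, which is in \cite{JL} but not restated here.
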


\begin{thm}
For partition $\lambda\in\mathcal{SP}_n,$ $\mu\in\mathcal{OP}_n,$ and non-negative integer $k$,
\begin{align}\label{e:G*p}
G_{k}^{*}p_{\mu}&=\sum\limits_{\nu\vartriangleleft \mu} p_\nu G^*_{k+|\nu|-n}=\sum_{i=0}^{n-k} \sum_{\nu\in \{\mu\}_i}p_\nu G^*_{k+i-n} \\ \label{e:p*G}
p^*_{k}G_{\lambda}&=\sum_{i=1}^{l(\lambda)}G_{\lambda_1}\cdots G_{\lambda_i-k}\cdots G_{\lambda_l}.
\end{align}
\end{thm}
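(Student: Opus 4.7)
The plan is to prove both identities by induction on the number of factors, using the single-step commutation relations \eqref{e:G*mpn} as the basic move. Both proofs are operator-theoretic in spirit, with the second one requiring that we apply the resulting operator expression to the vacuum $\mathbbm{1}$.

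For \eqref{e:G*p}, I would induct on $l(\mu)$. The base case $l(\mu)=0$ is immediate: both sides equal $G^*_k$. For the inductive step, write $p_\mu = p_{\mu_1}\, p_{\mu'}$ with $\mu' = (\mu_2,\ldots,\mu_l)$ of weight $n-\mu_1$, and apply the single commutation from \eqref{e:G*mpn}:
\begin{align*}
G^*_k p_{\mu_1} p_{\mu'} = G^*_{k-\mu_1} p_{\mu'} + p_{\mu_1} G^*_k p_{\mu'}.
\end{align*}
Each summand on the right is an instance of the identity for $\mu'$, with $G^*$ carrying index $k-\mu_1$ in the first and $k$ in the second. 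Applying the inductive hypothesis yields in each case a sum over subpartitions $\nu' \lhd \mu'$; those in the first summand are precisely the subpartitions of $\mu$ not containing the first part $\mu_1$ (with $\nu = \nu'$), while those in the second become $\nu = \mu_1 \cup \nu'$. A short index computation shows that in both cases $G^*$ ends up with index $k+|\nu|-n$, so the two partial sums combine into $\sum_{\nu \lhd \mu} p_\nu G^*_{k+|\nu|-n}$. The second form then follows by grouping subpartitions by weight $i=|\nu|$; here I would note that the upper limit $n-k$ reflects the fact that acting on $\mathbbm{1}$ kills all terms with $G^*_j$, $j>0$, as is clear from the series expansion \eqref{e:Qhallop*}.

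For \eqref{e:p*G}, I would again induct on $l(\lambda)$. The base case $l(\lambda)=0$ uses $G_\emptyset.\mathbbm{1} = \mathbbm{1}$ and $p^*_k \mathbbm{1} = 0$ (since $p^*_k$ is proportional to $\partial/\partial p_k$), so both sides vanish. For the inductive step, apply $p^*_k G_{\lambda_1} = G_{\lambda_1-k} + G_{\lambda_1} p^*_k$ to get
\begin{align*}
p^*_k G_\lambda.\mathbbm{1} = G_{\lambda_1-k} G_{\lambda_2}\cdots G_{\lambda_l}.\mathbbm{1} + G_{\lambda_1}\bigl(p^*_k G_{\lambda_2}\cdots G_{\lambda_l}.\mathbbm{1}\bigr),
\end{align*}
and rewrite the second summand via the inductive hypothesis as $\sum_{i=2}^{l} G_{\lambda_1} \cdots G_{\lambda_i-k} \cdots G_{\lambda_l}.\mathbbm{1}$. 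The first summand supplies the $i=1$ contribution, and combining gives exactly the claimed formula.

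The main obstacle is not computational but interpretive: the identity \eqref{e:G*p} is most naturally read as an operator identity on $\Gamma$, whereas \eqref{e:p*G} must be read as an equality of elements in $\Gamma$ obtained by acting on $\mathbbm{1}$ so that the residual operator term $G_{\lambda_1}\cdots G_{\lambda_l} p^*_k$ (which would otherwise appear on the right) is annihilated. Once this convention is fixed, both proofs reduce to careful bookkeeping of index shifts, and the combinatorial content is simply that subpartitions of $\mu$ are parametrized by the binary choice at each inductive step of whether to "absorb" the current part $\mu_1$ into the $G^*$ index.
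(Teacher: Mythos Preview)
Your proof is correct and follows essentially the same approach as the paper: induction on $l(\mu)$ (resp.\ $l(\lambda)$) using the single-step commutation relations \eqref{e:G*mpn}, splitting the subpartitions of $\mu$ according to whether the first part is absorbed. Your remark that \eqref{e:p*G} (and the truncated second form of \eqref{e:G*p}) must be read after applying to $\mathbbm{1}$ so that the residual $p^*_k$ (resp.\ $G^*_j$ with $j>0$) term vanishes is a useful clarification that the paper leaves implicit.
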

\begin{proof}
For \eqref{e:G*p}, we use induction on $l(\mu)$. The first step is immediate. The inductive hypothesis implies that \eqref{e:G*p} holds for $<l(\mu)$. By relation \eqref{e:G*mpn}, we have
\begin{align*}
G^*_{k}p_{\mu}&=G^*_{k-\mu_{1}}p_{\mu_2} \cdots p_{\mu_l}+p_{\mu_1}G^*_{k}p_{\mu_2} \cdots p_{\mu_l} \\
&=\sum\limits_{\nu\vartriangleleft \mu^{\hat{1}}} p_\nu G^*_{k+|\nu|-n}+p_{\mu_1}\sum\limits_{\nu\vartriangleleft \mu^{\hat{1}}} p_\nu G^*_{k+\mu_{1}+|\nu|-n} \\
&=\sum\limits_{\nu\vartriangleleft \mu} p_\nu G^*_{k+|\nu|-n}.
\end{align*}
The argument for \eqref{e:p*G} is similar.
\end{proof}

\begin{thm}\label{t:Siterative}
Let $\lambda\in\mathcal{SP}_n,$ $\mu\in\mathcal{OP}_n,$ then we have the following iterative formula for $Y^{\lambda}_{\mu}(t)$
\begin{align}\label{e:Siterative}
Y^{\lambda}_{\mu}(t)=\sum_{i=0}^{n-\lambda_1}\sum_{\nu\in \{\mu\}_i}\sum_{\rho\in\mathcal{OP}_{n-\lambda_1-i}}\frac{(-2)^{l(\rho)}}{z_{\rho}(t)}
Y_{\nu\cup\rho}^{\lambda^{\hat{1}}}(t).
\end{align}
\end{thm}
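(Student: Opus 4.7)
The plan is to translate $Y^{\lambda}_{\mu}(t)$ into a matrix coefficient and peel off the first row $\lambda_1$ of $\lambda$ using the adjoint vertex operator. Writing $G_\lambda.\mathbbm{1}=G_{\lambda_1}G_{\lambda^{\hat 1}}.\mathbbm{1}$ and invoking the adjoint relation $\langle G_n u,v\rangle=\langle u,G^*_n v\rangle$ (which follows directly from comparing \eqref{e:Qhallop} and \eqref{e:Qhallop*} exponential by exponential, in parallel with the Schur $Q$ case), the starting identity is
\[
Y^{\lambda}_{\mu}(t)=\langle G_{\lambda}.\mathbbm{1},\,p_\mu\rangle=\langle G_{\lambda^{\hat 1}}.\mathbbm{1},\,G^*_{\lambda_1}p_\mu\rangle.
\]

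Next I expand the right entry in two stages. First apply \eqref{e:G*p} with $k=\lambda_1$ and $|\mu|=n$ to get
\[
G^*_{\lambda_1}p_\mu=\sum_{i=0}^{n-\lambda_1}\sum_{\nu\in\{\mu\}_i}p_\nu\, G^*_{\lambda_1+i-n},
\]
and then view this as an element of $\Gamma$ by acting on $\mathbbm{1}$. Since $\lambda_1+i-n\leq 0$ throughout, set $m=n-\lambda_1-i\geq 0$ and use \eqref{e:G*n} to compute
\[
G^*_{-m}.\mathbbm{1}=\sum_{\rho\in\mathcal{OP}_m}\frac{(-2)^{l(\rho)}}{z_\rho(t)}\,p_\rho.
\]
Putting the two expansions together rewrites $G^*_{\lambda_1}p_\mu$ as $\sum_{i,\nu,\rho}\frac{(-2)^{l(\rho)}}{z_\rho(t)}\,p_{\nu\cup\rho}$, where the indices range as in the theorem statement.

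The proof finishes by pairing with $G_{\lambda^{\hat 1}}.\mathbbm{1}$. Since $\mu\in\mathcal{OP}_n$, any subpartition $\nu\lhd\mu$ consists of odd parts, and $\rho\in\mathcal{OP}_{n-\lambda_1-i}$, hence $\nu\cup\rho\in\mathcal{OP}_{n-\lambda_1}=\mathcal{OP}_{|\lambda^{\hat 1}|}$. The pairing $\langle G_{\lambda^{\hat 1}}.\mathbbm{1},p_{\nu\cup\rho}\rangle$ is therefore precisely $Y^{\lambda^{\hat 1}}_{\nu\cup\rho}(t)$ by definition, and bilinearity of $\langle\,\cdot\,,\,\cdot\,\rangle$ produces exactly \eqref{e:Siterative}.

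The proof is a two-line vertex operator calculation once the setup is in place; the only mild obstacle is making sure the matrix-coefficient identity $\langle G_n u,v\rangle=\langle u,G^*_n v\rangle$ is clean (rather than, e.g., picking up a sign or a reflection of index as happens in the Schur $Q$ case with $Q^*_n=Q_{-n}$). After that, the bookkeeping is transparent: the $i$-sum records how much of $\mu$ survives as an honest $p_\nu$ factor versus how much mass is re-created out of the vacuum by the creation tail of $G^*_{\lambda_1}$.
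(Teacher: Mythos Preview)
Your proof is correct and follows essentially the same approach as the paper's own argument: peel off $G_{\lambda_1}$ via the adjoint, apply \eqref{e:G*p} with $k=\lambda_1$, then evaluate each $G^*_{\lambda_1+i-n}.\mathbbm{1}$ using \eqref{e:G*n}, and recognize the resulting pairings as $Y^{\lambda^{\hat 1}}_{\nu\cup\rho}(t)$. The paper's proof is more terse (two lines) but uses exactly these two ingredients; your write-up simply makes the bookkeeping explicit.
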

\begin{proof}
Note that $(n\geq0)$
\begin{align*}
G^*_{-n}.\mathbbm{1}=\sum_{\rho\in\mathcal{OP}_n}\frac{(-2)^{l(\rho)}}{z_{\rho}(t)}p_{\rho}.
\end{align*}
Then \eqref{e:Siterative} follows from \eqref{e:G*p}.
\end{proof}

Now we want to discuss the properties of $Y^{\lambda}_{\mu}(t)$ similar to those of Green's polynomial $X_{\mu}^{\lambda}(t).$ As an immediate result of \eqref{e:Siterative}, we have $Y^{(n)}_{\mu}(t)=1$ for any $\mu\in\mathcal{OP}_n.$ Furthermore,
\begin{align*}
Y^{(k,n-k)}_{\mu}(t)&=\sum_{i=0}^{n-k}\sum_{\nu\in \{\mu\}_i}\sum_{\rho\in\mathcal{OP}_{n-k-i}}\frac{(-2)^{l(\rho)}}{z_{\rho}(t)} Y_{\nu\cup\rho}^{(n-k)}(t)\\
&=\sum_{i=0}^{n-k-1}\sum_{\nu\in \{\mu\}_i}\sum_{\rho\in\mathcal{OP}_{n-k-i}}\frac{(-2)^{l(\rho)}}{z_{\rho}(t)}+\sum_{\nu\in \{\mu\}_{n-k}}1.
\end{align*}
By Lemma \ref{t:twoidentities}, we have
\begin{align*}
Y^{(k,n-k)}_{\mu}(t)&=\frac{2(t-1)}{t+1}([D_{t^{-1}}(\mu)t^{n-k}]_+-([D_{t^{-1}}(\mu)t^{n-k}]_+\mid_{t=-1}))\\
& \qquad\quad+D^{(n-k)}(\mu)\\
&=\frac{2(t-1)}{t+1}([D_{t}(\mu)t^{-k}]_+-([D_{t}(\mu)t^{-k}]_+\mid_{t=-1}))+D^{(n-k)}(\mu).
\end{align*}

\begin{exmp}
For $\lambda=(4,3),$ $\mu=(5,1,1).$ we have
\begin{align*}
&Y^{(4,3)}_{(5,1,1)}(t)=\frac{2(t-1)}{t+1}([(t+1)^2(t^5+1)t^{-4}]_+\\
&\qquad -([(t+1)^2(t^5+1)t^{-4}]_+\mid_{t=-1}))+D^{(3)}(\mu)\\
&=\frac{2(t-1)}{t+1}(t^3+2t^2+t+0)+0=2t(t^2-1).
\end{align*}
\end{exmp}

It follows from the definition of $Y^{\lambda}_{\mu}(t)$ that ($t=0$)
\begin{align*}
Q_{\lambda}(x)=\sum_{\mu\in\mathcal{OP}_n}z_{\mu}^{-1}2^{l(\mu)}Y^{\lambda}_{\mu}(0)p_{\mu}(x).
\end{align*}
Recall that
\begin{align*}
G_{\lambda}(x;t)=\sum_{\nu\in\mathcal{SP}_n}L_{\nu\lambda}(t)Q_{\nu}(x).
\end{align*}
Combining with the two identities above, we have
\begin{align*}
G_{\lambda}(x;t)=\sum_{\mu\in\mathcal{OP}_n}\sum_{\nu\in\mathcal{SP}_n}z_{\mu}^{-1}2^{l(\mu)}L_{\nu\lambda}(t)Y^{\nu}_{\mu}(0)p_{\mu}(x).
\end{align*}

Since $p_{\mu}(x),$ $\mu\in\mathcal{OP}_n,$ form a basis of $\Gamma_n,$
\begin{align}\label{e:SL}
Y^{\lambda}_{\mu}(t)=\sum_{\nu\in\mathcal{SP}_n}L_{\nu\lambda}(t)Y^{\nu}_{\mu}(0).
\end{align}
Note that the degree of $L_{\nu\lambda}(t)$ is $n(\lambda)-n(\nu),$ it follows that the dominant term on the right-hand of \eqref{e:SL} is $L_{(n)\lambda}Y^{(n)}_{\mu}(0)=2^{l(\lambda)-1}t^{n(\lambda)}$, so the degree of $Y^{\lambda}_{\mu}(t)$ is $n(\mu)$.

Summarizing the above, we have the following parallel properties for $Y^{\lambda}_{\mu}(t)$.

\begin{thm}\label{t:spinG}
The spin Green polynomial $Y^{\lambda}_{\mu}(t)$ for $\lambda\in\mathcal{SP}_n,$ $\mu\in\mathcal{OP}_n$ have the following properties:\vspace{0.25cm}\\
(1) $Y^{\lambda}_{\mu}(t)=\sum_{\nu\in\mathcal{OP}_n}L_{\nu\lambda}(t)Y^{\nu}_{\mu}(0).$\\
(2) $Y^{\lambda}_{\mu}(t)$ is of degree $n(\mu)$ with the leading coefficient $2^{l(\lambda)-1}$.\\
(3) $Y^{\lambda}_{\mu}(0)=2^{\frac{l(\lambda)-l(\mu)+\epsilon(\lambda)}{2}}\zeta^{\lambda}_{\mu}.$\\
(4) $Y^{(n)}_{\mu}(t)=1.$\\
(5) $Y^{(k, n-k)}_{\mu}(t)=\frac{2(t-1)}{t+1}([D_t(\mu)t^{-k}]_+-([D_t(\mu)t^{-k}]_+\mid_{t=-1}))+D^{(n-k)}(\mu).$\\
(6) $Y^{\lambda}_{\mu}(t)=\sum\limits_{i=0}^{n-\lambda_1}\sum\limits_{\nu\in \{\mu\}_i}\sum\limits_{\rho\in\mathcal{OP}_{n-\lambda_1-i}}\frac{(-2)^{l(\rho)}}{z_{\rho}(t)}Y_{\nu\cup\rho}^{\lambda^{\hat{1}}}(t)$.
\end{thm}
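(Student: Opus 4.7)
The plan is to verify the six items by assembling the pieces developed earlier in Section 4. Item (6) is already the content of Theorem \ref{t:Siterative}, so nothing new is required there. Item (1) has in fact been derived in the paragraph immediately preceding the theorem: starting from the expansion $G_\lambda(x;t)=\sum_\nu L_{\nu\lambda}(t)Q_\nu(x)$ and the $t=0$ specialization $Q_\nu(x)=\sum_\mu z_\mu^{-1}2^{l(\mu)}Y^\nu_\mu(0)p_\mu(x)$, one substitutes and compares coefficients against the basis $\{p_\mu:\mu\in\mathcal{OP}_n\}$ of $\Gamma^n$; this is exactly \eqref{e:SL}.

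For item (3), I would specialize $t=0$ in the defining expansion of $Y^\lambda_\mu(t)$ and compare with the Frobenius-type formula for $\zeta^\lambda_\mu$ recalled at the start of Section 4. Since $G_\lambda(x;0)=Q_\lambda(x)$, matching coefficients of $p_\mu(x)$ immediately yields the factor $2^{(l(\lambda)-l(\mu)+\epsilon(\lambda))/2}$. Item (4) then follows from (6) with $\lambda=(n)$: the outer sum runs over $0\leq i\leq n-\lambda_1=0$, forcing $\nu=\emptyset$ and $\rho=\emptyset$, so the recursion collapses to the base value $Y^{\emptyset}_{\emptyset}(t)=1$.

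Item (2) is obtained from (1) together with the two facts from the corollary of Theorem \ref{t:Literative}, namely $\deg L_{\nu\lambda}(t)=n(\lambda)-n(\nu)$ and $L_{(n)\lambda}(t)=2^{l(\lambda)-1}t^{n(\lambda)}$, combined with $Y^{(n)}_\mu(0)=1$ from (4); the dominant contribution on the right-hand side of (1) comes from the unique $\nu=(n)$, pinning down the leading coefficient $2^{l(\lambda)-1}$. For item (5), I would substitute $\lambda=(k,n-k)$ into (6), apply (4) to collapse every $Y^{(n-k)}_{\nu\cup\rho}(t)$ to $1$, separate out the boundary case $i=n-k$ (which forces $\rho=\emptyset$) to produce the isolated summand $D^{(n-k)}(\mu)$, and invoke $\sum_{\rho\in\mathcal{OP}_m}(-2)^{l(\rho)}/z_\rho(t)=2(t-1)(m)_t$ from Lemma \ref{t:twoidentities} on the remaining part. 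Recognizing $\sum_{\nu\in\{\mu\}_i}1=D^{(i)}(\mu)$, one is then left with a one-variable sum in $i$ to be repackaged via the generating function $D_t(\mu)=\sum_i D^{(i)}(\mu)t^i$.

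The main obstacle is this final repackaging in (5): converting $\sum_{i=0}^{n-k-1}D^{(i)}(\mu)\cdot 2(t-1)(n-k-i)_t$ into the closed form displayed in the theorem. The key identity $(m)_t=(t^m-(-1)^m)/(t+1)$, together with the extraction of the polynomial part $[D_t(\mu)t^{-k}]_+$ of the Laurent polynomial $D_t(\mu)t^{-k}$, is what drives the simplification, and the $t=-1$ evaluation appearing in the formula accounts precisely for the term that would otherwise be singular once $t+1$ is placed in the denominator. Everything else is routine bookkeeping.
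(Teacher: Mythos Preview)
Your proposal is correct and follows essentially the same route as the paper: items (1), (3), (4), (5), and (6) are each handled exactly as in the discussion preceding the theorem (with (6) being Theorem~\ref{t:Siterative}, (4) its immediate specialization, (5) obtained from (6) via Lemma~\ref{t:twoidentities} and the generating function $D_t(\mu)$, and (1) being \eqref{e:SL}), while (2) is deduced from (1) using $\deg L_{\nu\lambda}=n(\lambda)-n(\nu)$ and $L_{(n)\lambda}=2^{l(\lambda)-1}t^{n(\lambda)}$ just as you indicate. Your remark that the $t=-1$ subtraction in (5) clears the $(t+1)$ in the denominator is exactly the point, and the repackaging you flag as the ``main obstacle'' is carried out in the paper in one line via the symmetry $D^{(i)}(\mu)=D^{(n-i)}(\mu)$, which identifies $D_{t^{-1}}(\mu)t^{n-k}$ with $D_t(\mu)t^{-k}$.
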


\begin{rem}
Morris and Abdel-Aziz \cite{MA} gave a two-row formula and an iterative formula for $\zeta^{\lambda}_{\mu},$ which can be viewed as a
 specialization of (5) at $t=0$ (up to powers of $2$).
\end{rem}

In the following we give tables of spin Green's polynomials for $n\leq 7$. We notice that
$q^{n(\mu)}Y^{\lambda}_{(1^n)}(q^{-1})$ display similarly positivity as the Green polynomials $q^{n(\mu)}X^{\lambda}_{(1^n)}(q^{-1})$, which are
the Poincar\'e polynomials of certain flag varieties \cite{HS}.

\vskip 0.5 in
{\bf Tables for $Y^{\lambda}_{\mu}(t)$}, $n=3, 4, 5, 6, 7$.

\begin{table}[H]
\centering
	
\caption{\label{tab:3}n=3}
	
\begin{tabular}{|c|c|c|}
		
\hline
		
\tabincell{c}{$\mu\backslash \lambda$ } & $(3)$ & $(2,1)$ \\
		
\hline
		
$(3)$ & \tabincell{c}{$1$} & \tabincell{c}{$2(t-1)$}  \\
		
\hline
		
$(1^3)$ & \tabincell{c}{$1$}    & \tabincell{c}{$2t+1$}      \\
		
\hline
		
\end{tabular}
	
\end{table}

\begin{table}[H]
	
\centering
	
\caption{\label{tab:4}n=4}
	
\begin{tabular}{|c|c|c|}
		
\hline
		
\tabincell{c}{$\mu\backslash \lambda$ } & $(4)$ & $(3,1)$  \\
		
\hline
		
$(3,1)$ & \tabincell{c}{$1$} & \tabincell{c}{$2t-1$}  \\
		
\hline

$(1^4)$   & \tabincell{c}{$1$}  & \tabincell{c}{$2(t+1)$}    \\
		
\hline
		
\end{tabular}
	
\end{table}

\begin{table}[H]
	
\centering
	
\caption{\label{tab:5}n=5}
	
\begin{tabular}{|c|c|c|c|}
		
\hline
		
\tabincell{c}{$\mu\backslash \lambda$ } & $(5)$ & $(4,1)$ & $(3,2)$  \\
		
\hline
		
$(5)$ & \tabincell{c}{$1$} & \tabincell{c}{$2(t-1)$} & \tabincell{c}{$2(t-1)^2$} \\
		
\hline

$(3,1^2)$ & \tabincell{c}{$1$}  & \tabincell{c}{$2t$}  & \tabincell{c}{$2t^2-1$}  \\
		
\hline

$(1^5)$   & \tabincell{c}{$1$}    & \tabincell{c}{$2t+3$}    & \tabincell{c}{$2(t^2+3t+1)$}     \\
		
\hline
		
\end{tabular}
	
\end{table}

\begin{table}[H]
	
\centering
	
\caption{\label{tab:6}n=6}
	
\begin{tabular}{|c|c|c|c|c|}
		
\hline
		
\tabincell{c}{$\mu\backslash \lambda$ } & $(6)$ & $(5,1)$ & $(4,2)$ & $(3,2,1)$ \\
		
\hline

$(5,1)$ & \tabincell{c}{$1$} & \tabincell{c}{$2t-1$} & \tabincell{c}{$2t(t-1)$} & \tabincell{c}{$2(t-1)^2(2t^2+2t+1)$}  \\
		
\hline
		
$(3,3)$ & \tabincell{c}{$1$}  & \tabincell{c}{$2(t-1)$}  & \tabincell{c}{$2(t-1)^2$}  & \tabincell{c}{$4(t-1)(t^3-t^2+1)$}  \\
		
\hline

$(3,1^3)$   & \tabincell{c}{$1$}    & \tabincell{c}{$2t+1$}    & \tabincell{c}{$2t^2+2t-1$}    & \tabincell{c}{$4t^4+4t^3-2t^2-2t-1$}   \\

\hline

$(1^6)$   & \tabincell{c}{$1$}    & \tabincell{c}{$2(t+2)$}    & \tabincell{c}{$2t^2+8t+5$}    & \tabincell{c}{$2(2t^4+8t^3+14t^2+5t+1)$}   \\
		
\hline

\end{tabular}
	
\end{table}

\begin{table}[H]
	
\centering
	
\caption{\label{tab:7}n=7}
	
\begin{tabular}{|c|c|c|c|c|c|}
		
\hline
		
\tabincell{c}{$\mu\backslash \lambda$ } & $(7)$ & $(6,1)$ & $(5,2,)$ & $(4,3)$ & $(4,2,1)$\\
		
\hline
		
$(7)$ & \tabincell{c}{$1$} & \tabincell{c}{$2(t-1)$} & \tabincell{c}{$2(t-1)^2$} & \tabincell{c}{$2(t-1)(t^2-t+1)$} & \tabincell{c}{$4t^2(t-1)^2$} \\
		
\hline

$(5,1^2)$ & \tabincell{c}{$1$}  & \tabincell{c}{$2t$}  & \tabincell{c}{$2t^2-1$}  & \tabincell{c}{$2t(t^2-1)$} & \tabincell{c}{$2(t-1)(2t^3+2t^2-1)$} \\

\hline
		
$(3^2,1)$ & \tabincell{c}{$1$}  & \tabincell{c}{$2t-1$}  & \tabincell{c}{$2t(t-1)$}  & \tabincell{c}{$2(t^3-t^2+1)$} & \tabincell{c}{$2(t-1)(2t^3-t+1)$} \\
		
\hline
		
$(3,1^4)$   & \tabincell{c}{$1$}    & \tabincell{c}{$2t+2$}  & \tabincell{c}{$2t(t+2)$}  & \tabincell{c}{$2t^3+4t-1$}    & \tabincell{c}{$2(t+1)(2t^3+2t^2-1)$}   \\
		
\hline
		
$(1^7)$   & \tabincell{c}{$1$}    & \tabincell{c}{$2t+5$}    & \tabincell{c}{$2t^2+10t+9$}    & \tabincell{c}{$2t^3+10t^2+18t+5$} & \tabincell{c}{$4t^4+20t^3+46t^2+28t+7$}  \\
		
\hline
		
\end{tabular}
	
\end{table}

\bigskip

\noindent {\bf Data availability}: All associated data are included in the paper.

\medskip

\vskip30pt \centerline{\bf Acknowledgments}
The project is partially supported by
Simons Foundation grant No. 523868 and NSFC grant No. 12171303.
\bigskip

\bibliographystyle{plain}

\end{document}